\numberwithin{equation}{section}
\theoremstyle{plain}
\newtheorem{thm}{Theorem}[section]
\theoremstyle{remark}
\newtheorem{rem}{Remark}[section]
\theoremstyle{plain}
\newtheorem{proposition}{Proposition}[section]
\theoremstyle{definition}
\newtheorem{definition}{Definition}[section]
\def\Re{{\rm Re}}
\def\Im{{\rm Im}}
\begin{document}

\begin{frontmatter}

\title{Regularity Properties and Simulations of Gaussian Random Fields on the Sphere cross Time 
}
\runtitle{Regularity and simulations of GRFs on Spheres cross Time.}


\author{\fnms{Jorge} \snm{Clarke De la Cerda} \thanksref{t2}  \corref{} \ead[label=e1]{clarkemove@gmail.com}}
\thankstext{t2}{supported by {\it Proyecto FONDECYT Post-Doctorado Nº 3150506}.}
\address{Departamento de Matemática, Universidad Técnica Federico Santa María.\\ Avda. Espa\~na 1680, Valparaíso, Chile.\\ 
CEREMADE, UMR CNRS 7534 Université Paris-Dauphine, PSL Research university, Place du Maréchal
de Lattre de Tassigny 75016 Paris, FRANCE.  \\
 \printead{e1}}

\author{\fnms{Alfredo} \snm{Alegría} \thanksref{t3}\ead[label=e2]{alfredo.alegria.jimenez@gmail.com}}
\thankstext{t3}{supported by {\it Beca CONICYT-PCHA / Doctorado Nacional / 2016-21160371}.}
\address{Newcastle University, School of Mathematics and Statistics.\\ 
 \printead{e2}}

\and
\author{\fnms{Emilio} \snm{Porcu}\thanksref{t4}
\ead[label=e3]{georgepolya01@gmail.com}
\ead[label=e4,url]{eporcu.mat.utfsm.cl}}
\thankstext{t4}{supported by {\it Proyecto FONDECYT Regular Nº 1170290}.}
\address{ Newcastle University, School of Mathematics and Statistics.\\ Chair of Spatial Analytics Methods Center.\\
Departamento de Matemática, Universidad Técnica Federico Santa María. \\
Avda. España 1680, Valparaíso, Chile. \\
\printead{e3} \\ \printead{e4}}


\runauthor{J. Clarke et al.}

\begin{abstract}
    We study the regularity properties of Gaussian fields defined over spheres cross time. In particular, we consider two alternative spectral decompositions for a Gaussian field on $\mathbb{S}^d \times \mathbb{R}$. For each decomposition, we establish regularity properties through Sobolev and interpolation spaces. We then propose a simulation method and study its level of accuracy in the $L^2$ sense. The method turns to be both fast and efficient. 
\end{abstract}

\begin{keyword}[class=MSC]
\kwd[Primary ]{60G60, 60G17, 41A25}
\kwd[; secondary ]{60G15, 33C55, 46E35, 33C45}
\end{keyword}

\begin{keyword}
\kwd{Gaussian random fields}
\kwd{Global data}
\kwd{Big data}
\kwd{Space-time covariance}
\kwd{Karhunen-Loève expansion}
\kwd{Spherical harmonics functions}
\kwd{Schoenberg's functions}
\end{keyword}



\end{frontmatter}

\section{Introduction}

	Spatio-temporal variability is of major importance in many fields, in particular for anthropogenic and natural processes, such as earthquakes, geographic evolution of diseases, income distributions, mortality fields, atmospheric pollutant concentrations, hydrological basin characterization and precipitation fields, among others.  
	For many natural phenomena involving, for instance, climate change and atmospheric variables, many branches of applied sciences have been increasingly interested in the analysis of data distributed over the whole sphere representing planet Earth and evolving through time. Hence the need for random fields models where the spatial location is continuously indexed through the sphere, and where time can be either continuous or discrete. It is common to consider the observations as a partial realization of a spatio-temporal random field which is usually considered to be Gaussian (see \cite{Christakos-2005, CH-1998, Dimitrakopoulos-1994}).   Thus, the dependence structure in space-time is governed by the covariance of the spatio-temporal Gaussian field,  and we refer the reader to \cite{Gneiting:2002, Stein:2005, PZ:2011} for significant contributions in this direction.  

Specifically, let $d$ be a positive integer, and let $\mathbb{S}^{d} = \{\mathbf{x}\in\mathbb{R}^{d+1},\|\mathbf{x}\|=1\}$ be the $d$-dimensional unit sphere in the Euclidean space $\mathbb{R}^{d+1}$, where $\|\cdot\|$ denotes the Euclidean norm of $\mathbf{x} \in \mathbb{R}^{d+1}$. We denote $Z = \{Z(\mathbf{x},t),\ (\mathbf{x},t)\in \mathbb{S}^{d}\times \mathbb{R}\}$ a Gaussian field on $\mathbb{S}^d \times \mathbb{R}$.
   

The {\em tour de force} in \cite{LS-2015} characterizes isotropic Gaussian random fields on the sphere $\mathbb{S}^{d}$ through Karhunen–Loève expansions with respect to the spherical harmonics
functions and the angular power spectrum. They show that the smoothness
of the covariance is connected to the decay of the angular power
spectrum and then discuss the relation to sample H{\"o}lder continuity and sample
differentiability of the random fields.

The present paper extends part of the work of \cite{LS-2015} to space-time. Such extension is non-trivial and depends on two alternative spectral decompositions of a Gaussian field on spheres cross time. In particular, we propose either Hermite or classical Karhunen-Loève expansions, and show how regularity properties can evolve dynamically over time. The crux of our arguments rely on recent advances on the characterization of covariance functions associated to Gaussian fields on spheres cross time (see \cite{Berg2016} and \cite{BGP-2015}). Notably, the Berg-Porcu representation in terms of Schoenberg functions inspires the proposal of alternative spectral decompositions for the temporal part, which become then crucial to establish the regularity properties of the associated Gaussian field.

The second part of the paper is devoted to simulation methods which should be computationally fast while keeping a reasonable level of accuracy, resulting in a notable step forward. Efficient simulation methods for random fields defined on the sphere cross time are, currently,  almost unexplored.   Cholesky decomposition is an appealing alternative since it is an exact method. However, the method has an order of computation of $\mathcal{O}(N^3)$, with $N$ denoting the sample size. This makes its implementation computationally challenging for large scale problems (the so called Big \textquotedblleft{\it n}\textquotedblright problem).   It is therefore mandatory to investigate efficient simulation methods. Here, we propose a simulation method based on a suitable truncation of the proposed double spectral decompositions. We establish its accuracy in the $L^2$ sense and illustrate how the model keeps a reasonable level of accuracy while being considerably fast, even when the number of spatio-temporal locations is very high.

The remainder of the article is as follows.   Section \ref{Preli} provides the basic material for a self-contained exposition.   The expansions for the kernel covariances and the random field are presented in Section \ref{Expansions}.   Section \ref{Reg_Prop} presents the regularity results for the kernel covariance functions in terms of weighted Sobolev spaces and weighted bi-sequence spaces. In Section \ref{Simul} our simulation method is developed and its accuracy is studied. Also, we provide some numerical experiments for illustrative purposes. In Appendix \ref{Apendix_KL} we also provide a rather general version of the Karhunen-Loève theorem.


The manuscript is intended for complex-valued random fields over $\mathbb{S}^{d} \times \mathbb{R}$ with  $d \in \mathbb{N}$, except for Section \ref{Simul} where the simulations considered are for real-valued random fields over $\mathbb{S}^{2} \times \mathbb{R}$.


\section{Preliminaries}\label{Preli}

	This section is largely expository and devoted to the illustration of the framework and notations that will be of major use throughout the manuscript. 
All the tools presented in this section are valid in $\mathbb{S}^{d}$, for any $d \in \mathbb{N}$. Some particular references to the case $d = 2$ are exposed, as they will be of use in Section \ref{Simul}.


\subsection{Spherical Harmonics Functions and Gegenbauer Polynomials} \label{sec_sph_harm}

	Spherical harmonics are restrictions to the unit sphere $\mathbb{S}^{d}$ of real harmonics polynomials in $\mathbb{R}^{d+1}$. They are also the eigenfunctions of the Laplace-Beltrami operator on $\mathbb{S}^{d}$. A deeper overview on spherical harmonics along with the properties listed in this subsection can be found in \citep{dai2013approximation}. 
	
	For $d \in \mathbb{N}$, let $L^{2}(\mathbb{S}^{d},\sigma_{d}; \mathbb{C}) = L^{2}(\mathbb{S}^{d} )$ denote the space of complex-valued square integrable functions over $\mathbb{S}^{d}$, where $\sigma_{d}$ denotes the surface area measure, and $\| \sigma_{d} \|$ denotes the surface area of $\mathbb{S}^{d}$, 
\begin{equation*}
\displaystyle
	\| \sigma_{d} \| := \int_{\mathbb{S}^{d}} \ {\rm d}\sigma_{d} = \frac{2 \pi^{(d+1)/2}}{\Gamma((d+1)/2)},
\end{equation*}
with $\Gamma$ being the  Gamma function.

For $j = 0, 1, \ldots, $ let $\mathcal{H}_{ j }^{d}$ denote the linear space  of spherical harmonics of degree $j$ over $\mathbb{S}^{d}$. For different degrees, spherical harmonics are orthogonal with respect to the inner product of $L^{2}(\mathbb{S}^{d})$. If $Y_{j, m, d} \in \mathcal{H}_{ j }^{d}$ and $Y_{j ', m, d} \in \mathcal{H}_{ j' }^{d}$, then
	
\begin{equation*}
\begin{split}
\displaystyle
	\left\langle Y_{j, m, d} , Y_{j ', m, d} \right\rangle_{L^{2}(\mathbb{S}^{d})} & := \int_{\mathbb{S}^{d}} \ Y_{j, m, d}(\mathbf{x}) \overline{ Y_{j ', m, d}(\mathbf{x}) } \  {\rm d}\sigma_{d} (\mathbf{x})
	\\
	& = \delta_{j, j '} \cdot \| \sigma_{d} \|;  \ \ \  j, j' \in \mathbb{N}, \  m = 0, \ldots , \dim (\mathcal{H}_{ j }^{d}),
\end{split}
\end{equation*}	
where $\delta_{j,j'}$ is the Kronecker delta function, being identically equal to one if $j=j'$, and zero otherwise.	

Corollary 1.1.4 in \cite{dai2013approximation} shows that 
\begin{equation}\label{SH_space_dim}
	\dim (\mathcal{H}_{ j }^{d}) = \frac{(2 j +d -1) (j +d -2)!}{j ! (d-1)!}, \ \ \ j \geq 1, \ \ \  \dim \mathcal{H}_{0}^{d} = 1.
\end{equation}	
	

Let $ \left\lbrace \mathcal{Y}_{j, m, d} : m = 1, \hdots, \dim (\mathcal{H}_{ j }^{d}) \right\rbrace $ be any orthonormal basis of $\mathcal{H}_{ j }^{d}$. Then, the family $\mathcal{S}_{d} := \left\lbrace \mathcal{Y}_{j, m, d} : j \in \mathbb{N}_{0}, m = 1, \hdots, \dim (\mathcal{H}_{ j }^{d}) \right\rbrace $ constitutes an orthonormal basis of $L^{2}(\mathbb{S}^{d})$ and Theorem 2.42 of \cite{Morimoto-1998} shows that 
\begin{equation*}
\displaystyle
		L^{2}(\mathbb{S}^{d}) = \bigoplus _{j = 0}^{+\infty} \mathcal{H}_{ j }^{d}.
\end{equation*}
 Besides, the addition formula for spherical harmonics states 
\begin{equation}\label{addition}
\sum_{m = 1}^{\dim (\mathcal{H}_{ j }^{d})} \mathcal{Y}_{j,m,d}(\mathbf{x}) \overline{ \mathcal{Y}_{j,m,d}(\mathbf{y}) } = \frac{(2j+d-1)}{(d-1)}  C_{j}^{(d-1)/2}( \langle \mathbf{x},\mathbf{y}\rangle_{\mathbb{R}^{d+1}}),  \ \ \  j \in \mathbb{N},
\end{equation}
where $\langle \cdot , \cdot \rangle_{\mathbb{R}^{d+1}}$ denotes the inner product in $\mathbb{R}^{d+1}$. Here $C_{j}^{r} (\cdot) $ are the Gegenbauer (or ultraspherical) polynomials of degree $j$ and order $r$, defined by
\begin{equation*}\label{Gegenbauer_Pol} 
	C_{j}^{r}(x) = \frac{(2 r)_{j}}{( r + 1/2)_{j}} P_{j}^{ ( r - 1/2 ) , ( r - 1/2 ) } (x), \qquad r > -1/2, \ x \in [-1, 1],
\end{equation*} 
where $P_{j}^{ ( \alpha, \beta ) } (\cdot) $ denotes the Jacobi polynomial of parameters $\alpha, \beta > -1$ and degree $j$, and $(\cdot)_j$ denotes the  Pochhammer symbol (rising factorial) defined by
\begin{equation*}\label{Pochhamer_symbol}
	(r)_{j} := r (r + 1) \cdot \ldots \cdot (r + j -1), \qquad r \in \mathbb{R}, \ j \in \mathbb{N}_{0}, 
\end{equation*}
where $(r)_{j} = \Gamma (r + j) / \Gamma (r)$, provided $r$ is not a negative integer. 

Gegenbauer polynomials constitute a basis of $L^{2}(-1,1)$ and satisfy the orthogonality relation (section 3.15 in \cite{BE-1953}) 
\begin{equation}\label{Gegen-ortho}
\displaystyle
	\int_{-1}^{1} C_{j}^{r}(x) C_{j'}^{r}(x) (1-x^{2})^{r - 1/2} {\rm d}x  =  \frac{\pi 2^{1-2r} \Gamma(j + 2r) }{j! (j+r) \Gamma(r)^{2}} \delta_{j, j'}. \ \ 
\end{equation} 
By Stirling's inequalities, for $n \in \mathbb{N}$ fixed, there exists constants $c_{1}(n)$ and $c_{2}(n)$ such that
\begin{equation*}
	c_{1}(n) j^{n-2} \leq \frac{\pi 2^{1-n} \Gamma(j + n) }{j! (j+n/2) \Gamma(n/2)^{2}} \leq c_{2}(n) j^{n-2}.
\end{equation*}
Hence, assuming $2r \in \mathbb{N}$, relation (\ref{Gegen-ortho}) becomes	
\begin{equation}\label{Gegen-ortho-2}
\displaystyle
	\int_{-1}^{1} C_{j}^{r}(x) C_{j'}^{r}(x) (1-x^{2})^{r - 1/2} dx  \simeq j^{2r - 2} \delta_{j,j'}.
\end{equation} 
Besides, for $n \leq j$ we observe that (see section 10.9 in \cite{BE-1953}) 
\begin{equation}\label{Gegen-deriv}
\displaystyle
	\frac{\partial^{n}}{\partial x^{n}} C_{j}^{r} (x) = 2^{n} (r)_{n} C_{j-n}^{r + n} (x) \simeq C_{j - n}^{r + n} (x), \ x \in [-1,1]
\end{equation}

In what follows $c_{j}(d, x)$ denotes the standardized Gegenbauer polynomial, being identically equal to one for $x = 1$ and $r = (d-1)/2$, that is
\begin{equation} \label{Gegen-norm}
	c_{j}(d, x) = \frac{C_{j}^{(d-1)/2}(x)}{C_{j}^{(d-1)/2}(1)} = \frac{j !}{(d-1)_{j}} C_{j}^{(d-1)/2}(x), \ \ \  x \in [-1, 1].
\end{equation}
It is straightforward to see that
\begin{equation}\label{Gegen-dim-eq}
	\frac{\dim (\mathcal{H}_{j}^{d})}{C_{j}^{(d-1)/2}(1)} = \frac{2j + d - 1}{d-1} \simeq  j.
\end{equation}

\begin{rem}
When $d = 2$, i.e., the $\mathbb{S}^{2}$ case, $C_{j}^{1/2} (\cdot) = P_{j} (\cdot) $, where $P_{j} (\cdot)$ is the Legendre polynomial of degree $j$ \cite{dai2013approximation}.
\end{rem} 

See \cite{dai2013approximation} and \cite{Morimoto-1998} for a deeper overview of spherical harmonics, and \cite{BE-1953} for a detailed description of Gegenbauer, Jacobi and Legendre polynomials.



\subsection{Isotropic Stationary Gaussian Random Fields on the Sphere cross Time}


	Let $(\Omega, \mathcal{F}, \mathbb{P})$ be a complete probability space. Consider $\mathbb{S}^{d}$ and $\mathbb{S}^{d}\times \mathbb{R}$ as manifolds contained in $\mathbb{R}^{d+1}$ and in $\mathbb{R}^{d+2}$ respectively. 


\begin{definition}
The geodesic distance $\theta$ on $\mathbb{S}^{d}$ (\textquotedblleft great circle\textquotedblright \ or \textquotedblleft spherical\textquotedblright \ distance) is defined by
\begin{equation}\label{GC-dist}
\displaystyle
	\theta ({\bf x}, {\bf y}) = \arccos(\langle {\bf x}, {\bf y} \rangle _{\mathbb{R}^{d+1}} ), \ \ \ {\bf x}, {\bf y} \in \mathbb{S}^{d}.
\end{equation}

The geodesic distance $\rho$ on $\mathbb{S}^{d}\times \mathbb{R}$ is defined through
\begin{equation*}\label{GCT-dist}
\displaystyle
	\rho(({\bf x}, s), ({\bf y},t)) = \sqrt{\theta \left({\bf x}, {\bf y} \right)^{2} + (t-s)^{2}}, \ \ \ ({\bf x}, s),  ({\bf y}, t) \in \mathbb{S}^{d} \times \mathbb{R}.
\end{equation*}
\end{definition}	
	
	Throughout, unless it is explicitly presented in a different way, we write $\mathbb{S}^{d}$ instead of $(\mathbb{S}^{d}, \sigma_{d})$, $\sigma_{d}$ being the surface area measure, which is equivalent to other uniformly distributed measures on $\mathbb{S}^{d}$, such as the Haar measure or the Lebesgue spherical measure \citep{MR0260979}. Analogously, we write $\mathbb{R}$ instead of $(\mathbb{R}, \mu)$, for $\mu$ being the Lebesgue measure.

\begin{definition}
A $\mathcal{F} \otimes \mathcal{B}(\mathbb{S}^{d} \times \mathbb{R})$-measurable mapping, $Z: \Omega \times \mathbb{S}^{d} \times \mathbb{R} \mapsto  \mathbb{C} $, is called a complex-valued random field on $\mathbb{S}^{d} \times \mathbb{R}$. 
\end{definition}

A complex-valued random field $Z = \left\lbrace Z(\mathbf{x}, t) :\ \mathbf{x} \in \mathbb{S}^{d},\ t \in \mathbb{R} \right\rbrace$ is called Gaussian if for all $k \in \mathbb{N}$, $(\mathbf{x}_{1}, t_{1}),\hdots,(\mathbf{x}_{k}, t_{k}) \in \mathbb{S}^{d} \times \mathbb{R}$, the random vector 
$$ \left( \Re Z(\mathbf{x}_{1}, t_{1}),\hdots, \Re Z(\mathbf{x}_{k}, t_{k}) , \Im Z(\mathbf{x}_{1}, t_{1}),\hdots, \Im Z(\mathbf{x}_{k}, t_{k}) \right)^{\top} , $$ is multivariate Gaussian distributed.
Here, $\top$ denotes the transpose operator.

A function $h:\mathbb{S}^{d} \times \mathbb{R} \times \mathbb{S}^{d} \times \mathbb{R} \longrightarrow \mathbb{C}$ is positive definite if 
\begin{equation}
\label{pos-def} \displaystyle
\sum_{i=1}^{n} \sum_{j=1}^{n} c_{i} \overline{ c_{j} } h(\mathbf{x}_{i}, t_{i}, \mathbf{x}_{j}, t_{j}) \geq 0,
\end{equation}
for all finite systems of pairwise distinct points $ \left\lbrace (\mathbf{x}_{k},t_{k}) \right\rbrace_{k = 1}^{n} \subset \mathbb{S}^{d} \times \mathbb{R}$ and constants $c_{1}, \ldots , c_{n} \in \mathbb{C}$.   A positive definite function $h$ is strictly positive definite if the inequality (\ref{pos-def}) is strict, unless $c_{1} =  \cdots = c_{n} = 0$. 

We call a function  $h:\mathbb{S}^{d} \times \mathbb{R} \times \mathbb{S}^{d} \times \mathbb{R} \longrightarrow \mathbb{C}$  spatially isotropic and temporally stationary if there exists a function $\psi : [-1, 1] \times \mathbb{R} \longrightarrow \mathbb{C}$ such that 
\begin{equation}
\label{Def_Iso-2}
h(\mathbf{x}, t, \mathbf{y}, s) = \psi \left(\cos \theta(\mathbf{x},\mathbf{y}), t-s \right), \ \ \ (\mathbf{x}, t), (\mathbf{y}, s) \in \mathbb{S}^{d} \times \mathbb{R}.
\end{equation}
Hence, a spatially isotropic and temporally stationary function depends on its arguments via the great circle distance $\theta(\mathbf{x}, \mathbf{y})$ and the time lag, or equivalently, via the inner product $\left\langle \mathbf{x}, \mathbf{y} \right\rangle_{\mathbb{R}^{d+1}} $ and the time lag.


\begin{definition}
\label{Def_Iso}
We call a random field $Z = \left\lbrace Z(\mathbf{x}, t) :\ \mathbf{x} \in \mathbb{S}^{d},\ t \in \mathbb{R}  \right\rbrace$ 2-weakly isotropic stationary if $\mathbb{E} \left[ Z(\mathbf{x}, t) \right]$ is constant for all $(\mathbf{x}, t) \in \mathbb{S}^{d} \times \mathbb{R}$, and if the covariance $h$ is a spatially isotropic and temporally stationary function on $(\mathbb{S}^{d} \times \mathbb{R})^{2}$. The associated function $\psi$ in (\ref{Def_Iso-2}) is called the covariance kernel or simply kernel. 
\end{definition}
 
\vspace{0.3cm} 
 
\begin{rem}
A Gaussian random field (GRF) which is  2-weakly isotropic stationary on $\mathbb{S}^{d} \times \mathbb{R}$, is in fact isotropic in the spatial variable and stationary in the time variable (see \cite{MP-2011}), hence, it has an invariant distribution under rotations on the spatial variable, and under translations on the temporal variable.  
\end{rem} 

Throughout the manuscript we work with zero-mean random fields, with no loss of generality. 

\subsection{Kernel Covariance Functions on the Sphere cross Time}

	In his seminal paper, \cite{schoenberg1942} characterized the class of continuous functions $f: [-1, 1] \mapsto  \mathbb{R}$ \ such that $f (\cos \theta)$ is positive definite over the product space $\mathbb{S}^{d} \times \mathbb{S}^{d}$, with $\theta$ defined in equation (\ref{GC-dist}).   Recently, \cite{Berg2016} extended Schoenberg's characterization by considering the product space $\mathbb{S}^{d} \times G$, with $G$ being a locally compact group. They defined the class ${\cal P}(\mathbb{S}^{d}, G)$ \ of continuous functions $ \psi: [-1, 1] \times G \mapsto  \mathbb{C}$ such that $ \psi (\cos \theta , u^{-1} \cdot v)$ is positive definite on $(\mathbb{S}^{d} \times G)^{2}$. In particular, the case $G=\mathbb{R}$  offers a characterization of spatio-temporal covariance functions of centred 2-weakly isotropic stationary random fields over the sphere cross time.

Let $\mathcal{P}(\mathbb{R})$ denote the set of continuous and positive definite functions on $\mathbb{R}$. For $d = 1,2, \ldots, $ we consider the class ${\cal P}(\mathbb{S}^{d}, \mathbb{R})$ of continuous functions $\psi:[-1,1] \times \mathbb{R}  \mapsto  \mathbb{\mathbb{C}}$ such that the associated spatially isotropic temporally stationary function $h$ in (\ref{Def_Iso-2}) is positive definite. The following result, rephrased from \cite{Berg2016}, allows to identify the class ${\cal P} (\mathbb{S}^{d}, \mathbb{R})$ with the covariances functions of 2-weakly isotropic stationary random fields on $\mathbb{S}^{d} \times \mathbb{R}$.

\begin{thm}
\label{Berg_Porcu_Thm}
Let $d \in \mathbb{N}$ and let $\psi:[-1,1] \times \mathbb{R} \mapsto  \mathbb{C}$ be a continuous mapping.   Then, $\psi \in {\cal P} (\mathbb{S}^{d}, \mathbb{R})$ if and only if there exists a sequence $\left\lbrace \varphi_{j,d} \right\rbrace_{ j \in \mathbb{N}} \in \mathcal{P}(\mathbb{R})$ with $\displaystyle \sum_{j=0}^{\infty} \varphi_{j,d}(0) < +\infty$ such that 
\begin{equation} \label{SBP}
\displaystyle
\psi \left( \cos\theta, t \right) = 
 \sum_{j=0}^{\infty} \varphi_{j,d}(t) c_{j}(d, \cos\theta ) \dim (\mathcal{H}_{ j }^{d}) , \ \ \  \theta \in [0,\pi], \ \ t \in \mathbb{R},
\end{equation}
where
\begin{equation}
\label{Sf}
\varphi_{j,d}(t) = \displaystyle \frac{ \| \sigma_{d} \|}{ \| \sigma_{d+1} \| } \int_{-1}^{1} \psi (x,t) c_{j}(d, x ) (1-x^{2})^{d/2 -1 } {\rm d}x, 
\end{equation}  
are called Schoenberg's functions, $\dim (\mathcal{H}_{ j }^{d})$ is given by (\ref{Gegen-norm}), and $c_{j}( \cdot, \cdot )$ by (\ref{SH_space_dim}). For $(\theta, t) \in [0,\pi] \times \mathbb{R}$, the series in (\ref{SBP}) is uniformly convergent. \\
\end{thm}


\begin{rem} Some comments are in order:
\begin{itemize}
	\item For ${\bm \varepsilon} _{1}  = (1,0, \ldots , 0)$ a unit vector in $\mathbb{R}^{d+1}$, we may consider the mapping $\psi_{ {\bm \varepsilon} _{1}} : \mathbb{S}^{d} \times \mathbb{R} \mapsto  \mathbb{C}$ given by	
\begin{equation*}
	\psi_{ {\bm \varepsilon} _{1}}({\bf y}, t) = \psi ( \left\langle {\bm \varepsilon} _{1}, {\bf y} \right\rangle_{\mathbb{R}^{d+1}}, t),  \ \ \  ({\bf y}, t) \in  \mathbb{S}^{d} \times  \mathbb{R}.
\end{equation*}
Then, arguments at page 13 of \cite{Berg2016}, show that
\begin{eqnarray*}
\displaystyle
	\varphi_{j,d}(t) &=&  \int_{\mathbb{S}^{d}} \psi (\left\langle {\bm \varepsilon} _{1}, {\bf y}\right\rangle _{\mathbb{R}^{d+1}} , t) c_{j}(d, \left\langle  {\bm \varepsilon} _{1}, {\bf y}\right\rangle _{\mathbb{R}^{d+1}}) \  {\rm d}\sigma_{d}({\bf y}) \\
	&=&  \left\langle \psi_{ {\bm \varepsilon} _{1}}(\cdot , t) , c_{j}(d, \left\langle {\bm \varepsilon} _{1}, \cdot \right\rangle _{\mathbb{R}^{d+1}} ) \right\rangle_{L^{2}(\mathbb{S}^{d} )}.
\end{eqnarray*}
Hence, for each $t \in \mathbb{R}$, the Schoenberg's functions $\varphi_{j,d}(t)$ given by (\ref{Sf}) may be understood as the orthogonal projection of  $\psi_{ {\bm \varepsilon} _{1}}(\cdot , t)$ onto  $\mathcal{H}_{j}^{d}$.


\item We note that, in comparison with the representation of covariance functions for 2-weakly isotropic random fields on the sphere,  representation (\ref{space_time_cov}) does not consider Schoenberg coefficients $\varphi_{j,d}$ but Schoenberg functions $\varphi_{j,d}( \cdot )$, which will play a fundamental role subsequently.

\end{itemize}

\end{rem}


\section{Expansions for Isotropic Stationary GRFs and Kernel Covariance Functions} \label{Expansions}

According to Theorem \ref{Berg_Porcu_Thm}, the kernel $\psi$ of an isotropic stationary GRF $Z$ on $\mathbb{S}^{d} \times \mathbb{R}$ admits the following representation:
\begin{equation}
\label{space_time_cov}
\psi (x, u) = \sum_{j=0}^{\infty} \varphi_{j,d}(u) c_{j}(d, x ) \dim (\mathcal{H}_{j}^{d}),  \ \ \  x \in [-1,1], \ \ u \in  \mathbb{R}, 
\end{equation}
where $ \left\lbrace \varphi_{j,d} \right\rbrace_{j \in \mathbb{N} } $ is a sequence of functions in $\mathcal{P}(\mathbb{R})$, such that the series is uniformly convergent. 


Expression (\ref{space_time_cov}) allows to consider different expansions for the kernel. Before introducing these representations, we present the expansion for the random field which motivates the simulation methodology.

\subsection{Karhunen-Loève Expansions for Isotropic Stationary GRFs on the Sphere cross Time}

	In \cite{Jones-1963}, the following Karhunen-Loève representation for an isotropic stationary GRF $Z$ over $\mathbb{S}^2 \times \mathbb{R}$ is proposed, 
\begin{equation}
\label{JR}
Z(\mathbf{x},t)   = \sum_{j=0}^\infty       \sum_{m = -j}^{j} X_{j,m}(t)\mathcal{Y}_{j,m}(\mathbf{x} ),  \ \ \ (\mathbf{x}, t) \in \mathbb{S}^2 \times \mathbb{R},
\end{equation}
where $ \left\lbrace X_{j,m}(t) \right\rbrace _{j, m} $ is a sequence of one-dimensional complex-valued mutually independent stochastic processes. The set of all $X_{j,m}(t)$ forms a denumerable infinite dimensional stochastic process which completely defines the process on the sphere, and $\mathcal{Y}_{j,m}(\mathbf{x} )$ are the elements of an orthonormal basis of $\mathcal{H}_{j}^{2}$.

Representations (\ref{space_time_cov}) for the spatio-temporal covariance and (\ref{JR}) for the random field, allow us to introduce the following family of GRFs on $\mathbb{S}^d \times \mathbb{R}$ for any $d \in \mathbb{N}$.

\begin{definition}
\label{Def_ACP}
Let $Z$ be a random field on $\mathbb{S}^d \times \mathbb{R}$ defined, in the mean square sense, as 
\begin{equation}
\label{sim_st_1}
Z(\mathbf{x},t)   = \sum_{j=0}^\infty  \sum_{m = 0}^{\dim (\mathcal{H}_{ j }^{d})} X_{j,m,d}(t)  \mathcal{Y}_{j,m,d}(\mathbf{x} ).
\end{equation}
Here, for each $j, m$ and $d$, $\{X_{j,m,d}(t), t\in\mathbb{R}\}$ is a complex-valued zero-mean stationary  Gaussian process such that 
\begin{equation*} \label{Cov_X}
\displaystyle
\begin{split} 
{\rm cov}\{X_{j,m,d}(t),X_{j ', m',d}(s)\} := & \  \mathbb{E} \{X_{j,m,d}(t) \ \overline{ X_{j ', m',d}(s) } \} 
\\
= & \ \varphi_{j,d}(t-s) \delta_{j , j '}\delta_{m , m'},
\end{split}
\end{equation*}
where $\{ \varphi_{j,d}\}_{j \in \mathbb{N} }$ represents the Schoenberg's functions associated to the mapping $\psi$ in Equation (\ref{space_time_cov}). 
\end{definition}

\begin{rem}
Examples of random field satisfying Definition (\ref{Def_ACP}) are found in the Appendix of \cite{BGP-2015}. 
\end{rem}

\begin{proposition}
\label{prop2}

Let $Z$ be a random field as in Definition \ref{Def_ACP}. 
Then, $Z$ is an isotropic stationary GRF,  with zero-mean and covariance function given by  $\psi$ as in (\ref{space_time_cov}).
\end{proposition}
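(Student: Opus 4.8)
The plan is to verify directly the three claims---Gaussianity, zero mean, and the stated covariance---and then invoke Definition \ref{Def_Iso} together with Theorem \ref{Berg_Porcu_Thm}. First I would argue that the series in \eqref{sim_st_1} converges in $L^2(\Omega)$ for each fixed $(\mathbf{x},t)$: using the independence in \eqref{Cov_X}, the orthogonality of the spherical harmonics, the addition formula \eqref{addition}, and the normalization \eqref{Gegen-norm}, the second moment of the partial sums telescopes into a tail of $\sum_{j} \varphi_{j,d}(0) c_j(d,1) = \sum_j \varphi_{j,d}(0)$, which is finite by the last assertion of Theorem \ref{Berg_Porcu_Thm}; hence the partial sums form a Cauchy sequence in $L^2(\Omega)$ and the limit $Z(\mathbf{x},t)$ is well defined. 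Since each partial sum is a finite linear combination of the jointly Gaussian variables $X_{j,m,d}(t)$ and $L^2$-limits of Gaussian vectors are Gaussian, the field $Z$ is Gaussian; linearity of expectation and $\mathbb{E}[X_{j,m,d}(t)]=0$ give $\mathbb{E}[Z(\mathbf{x},t)]=0$.

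Next I would compute the covariance. For $(\mathbf{x},t),(\mathbf{y},s)\in\mathbb{S}^d\times\mathbb{R}$, interchanging expectation and the (now justified) limits and using \eqref{Cov_X},
\begin{equation*}
\mathbb{E}\{Z(\mathbf{x},t)Z(\mathbf{y},s)\} = \sum_{j=0}^\infty c_{j,d}^2 \,\varphi_{j,d}(t-s) \sum_{m=1}^{\dim(\mathcal{H}_j^d)} \mathcal{Y}_{j,m,d}(\mathbf{x})\mathcal{Y}_{j,m,d}(\mathbf{y}).
\end{equation*}
Applying the addition formula \eqref{addition} turns the inner sum into $\frac{2j+d-1}{d-1}C_j^{(d-1)/2}(\langle\mathbf{x},\mathbf{y}\rangle)$; multiplying by $c_{j,d}^2 = \frac{d-1}{(2j+d-1)C_j^{(d-1)/2}(1)}$ cancels the prefactor and produces exactly $c_j(d,\cos\theta(\mathbf{x},\mathbf{y}))$ as in \eqref{Gegen-norm}. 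This yields $\mathbb{E}\{Z(\mathbf{x},t)Z(\mathbf{y},s)\} = \sum_{j\geq 0}\varphi_{j,d}(t-s)c_j(d,\cos\theta) = \psi(\theta(\mathbf{x},\mathbf{y}),t-s)$, which depends on the points only through $\cos\theta(\mathbf{x},\mathbf{y})$ and $t-s$; by Definition \ref{Def_Iso} the field is isotropic stationary, and its kernel is $\psi$ from \eqref{space_time_cov}.

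The main technical obstacle is making the interchange of limits and expectation rigorous and controlling the joint convergence when both arguments vary: one must check that the double-indexed partial sums converge in $L^2(\Omega\times\Omega)$ or equivalently that the bilinear form is continuous, which again reduces to the summability $\sum_j \varphi_{j,d}(0)<\infty$ and the Cauchy--Schwarz bound $|\varphi_{j,d}(t-s)|\leq\varphi_{j,d}(0)$ (valid since each $\varphi_{j,d}\in\mathcal{P}(\mathbb{R})$, as recalled in the remark following Theorem \ref{Berg_Porcu_Thm}). A secondary point is to confirm that the Karhunen--Loève expansion \eqref{KL-2} of each $X_{j,m,d}$ is consistent with the existence of such processes---but existence is already guaranteed abstractly since $\varphi_{j,d}$ is continuous and positive definite on $\mathbb{R}$, so a stationary Gaussian process with that covariance exists, and the appeal to the Appendix only supplies its spectral form, which is not needed for this proposition. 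Everything else is routine bookkeeping with the orthogonality relations.
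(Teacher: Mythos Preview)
Your proposal is correct and follows exactly the route the paper indicates: the paper's own proof merely states that the result is immediate from the covariance structure of the processes $X_{j,m,d}$ together with the addition formula \eqref{addition}, leaving the details to the reader. You have supplied precisely those details---the $L^2(\Omega)$ convergence via $\sum_j \varphi_{j,d}(0)<\infty$, the cancellation between $c_{j,d}^2$ and the addition-formula prefactor to recover $c_j(d,\cos\theta)$, and the justification of the limit--expectation interchange---so there is nothing to add or correct.
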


\begin{proof} The proof follows straight by using the properties of the process $X_{j,m,d}$, and the addition formula in Equation (\ref{addition}). 
\end{proof}

	For each $j, m, d$, the process $\{X_{j,m,d}(t), t\in\mathbb{R}\}$ has the following Karhunen-Loève expansion (see Appendix \ref{Apendix_KL}),
\begin{equation}
\label{KL-2}
\displaystyle
	X_{j,m,d}(t) =  \sum_{k=0}^\infty \lambda_{j, k, d} \, \zeta_{k}(t),  \ \ \ t \in \mathbb{R},
\end{equation}
where for each $j, m, d \in \mathbb{N}$, $\{\lambda_{j, k, m,d}\}_{k}$ is a sequence of independent complex-valued random variables defined by 
\begin{equation*} \label{rv_kl_2}
	\lambda_{j, k, m,d} := \int_{\mathbb{R}} X_{j,m,d}(t) \overline{ \zeta_{k}(t) } {\rm d}\mu(t).
\end{equation*}
Also, $\lambda_{j, k, m,d} \sim \mathcal{N}(0, a_{j, k, d})$, where $\{ a_{j, k, d}\}_{k}$ and $\{\zeta_k\}_{k}$ are the eigenvalues and eigenfunctions (respectively) of the integral operator $\mathcal{K}_{\varphi} : L^{2}(\mathbb{R}) \mapsto  L^{2}(\mathbb{R})$ associated to $\varphi_{j, d}$, defined by
\begin{equation*}\label{Schoenberg_Op}
\mathcal{K}_{\varphi} (f) (t) := \int_{\mathbb{R}} \varphi_{j, d} (t-s) \delta_{j , j '} \delta_{m , m'} f(s)  {\rm d}\mu(s), \ f \in L^{2}(\mathbb{R})
\end{equation*}

\begin{rem} By (\ref{KL-2}), an alternative way to write (\ref{sim_st_1}) is:
\begin{equation} \label{double_kl}
  Z(\mathbf{x},t) = \sum_{j=0}^{\infty}  \sum_{m = 1 } ^{ \dim (\mathcal{H}_{ j }^{d})}    \sum_{k=0}^\infty \lambda_{j, k, m,d} \, \zeta_{k}(t)   \mathcal{Y}_{j,m,d}(\mathbf{x}),  \ \ \ ({\bf x}, t) \in \mathbb{S}^{d} \times \mathbb{R}.
\end{equation}
Expressions (\ref{sim_st_1}) or (\ref{double_kl}) represent a way to construct isotropic stationary GRFs on the sphere cross time, and suggest a spectral simulation method. However, it is not yet proved that any  isotropic stationary GRF on the sphere cross time can be written in this way.  
\end{rem}

\subsection{Double Karhunen-Loève Expansion of Kernel Covariance Functions}

	By stationarity of the process $X$ and Karhunen-Loève theorem we have,
	

\begin{equation*}  \label{eq_1}
\begin{split}
\varphi_{j,d} (u) \delta_{j,j'} \delta_{m,m'} =& \ {\rm cov}\{X_{j,m,d}(u),X_{j ', m',d}(0)\} 
\\
= & \ \sum_{k=0}^{\infty} a_{j, k, d} \, \zeta_k(u)\zeta_k(0), \ \ \ u \in \mathbb{R}.
\end{split}
\end{equation*}
Hence, a slight abuse of notation allows to reformulate the last expression to

\begin{equation*}  \label{eq_2}
\varphi_{j,d} (u) =  \sum_{k=0}^{\infty} a_{j, k, d} \, \zeta_k(u), \ \ \ u \in \mathbb{R}.
\end{equation*}

Therefore, the kernel covariance function $\psi$ in (\ref{space_time_cov}) also admits the expansion
\begin{equation}\label{Ang_spa_time_cov}
 \displaystyle
 \psi\left( x , t \right) = \sum_{j = 0}^{\infty} \dim (\mathcal{H}_{j}^{d})  \sum_{k=0}^{\infty} a_{j, k, d} \, \zeta_k(t) c_{j} \left( d , x \right) ,  \ \ \ (x , t) \in [-1,1] \times \mathbb{R}.
\end{equation}
Following \cite{LS-2015}, we call the series $\{ a_{j, k, d}\}_{j, k \in \mathbb{N}} \subset \mathbb{R} $ the {\it spatio-temporal angular power spectrum}. 

Theorem \ref{Berg_Porcu_Thm} implies that $ \left\lbrace \sum_{k=0}^{\infty}   a_{j, k, d} \, \zeta_{k}(\cdot) \right\rbrace_{j \in \mathbb{N} } $ is a sequence of continuous and positive definite functions. Further,  
\begin{equation*} \label{constr1}
\sum_{j=0}^{\infty} \sum_{k=0}^{\infty} a_{j, k, d} \, \zeta_{k} ( 0 ) < +\infty.
\end{equation*}

\subsection{Hermite Expansion of Kernel Covariance Functions}

It is well known that any $\varphi \in \mathcal{P}(\mathbb{R})$ satisfies $|\varphi(t) | \leq \varphi(0)$ (see \citep{Sasvari1994}). Therefore, $\varphi \in \mathcal{P}(\mathbb{R})$, ensures that $\varphi \in L^{2}(\mathbb{R}, \nu)$ for any finite measure $\nu$, in particular, any Gaussian measure.

Let $\nu$ be the standard Gaussian measure. As the Schoenberg's functions $\{ \varphi_{j,d}\}_{j \in \mathbb{N} }$ associated to $\psi$ in Equation (\ref{space_time_cov}) belong to the class $\mathcal{P}(\mathbb{R}) \subset L^{2}(\mathbb{R}, \nu)$, they can be expanded in terms of normalized Hermite polynomials.   For each $j, d \in \mathbb{N}$ there exist constants $ \left\lbrace \alpha_{j,k,d} \right\rbrace_{k} \in \mathbb{C}$ such that
\begin{equation*}\label{Her_exp_cov}
\displaystyle
\varphi_{j,d} (u) = \sum_{k = 0}^{\infty} \alpha_{j, k, d} H_{k}(u), \ \ \ u \in \mathbb{R}.
\end{equation*}
The series converges in $L^{2}(\mathbb{R}, \nu)$, and each $H_{k}$ is a normalized Hermite polynomial of degree $k$ given by
\begin{equation*}\label{Her_pol}
\displaystyle
	H_{k}(u) = \frac{(-1)^{k}}{ ( k! \sqrt{2\pi} )^{1/2} } e ^{\frac{u^{2}}{2}} \frac{{\rm d}^{k}}{{\rm d}u^{k}} e ^{\frac{{-u}^{2}}{2}}, \qquad k = 0,1,2, \ldots
\end{equation*}

Consequently, the kernel $\psi$ in (\ref{space_time_cov}) can be reformulated as 
\begin{equation}\label{Her_spa_time_cov}
\displaystyle
\begin{split}
\psi( x , t) = & \ \sum_{j=0}^{\infty} \dim (\mathcal{H}_{j}^{d}) \sum_{k = 0}^{\infty} \alpha_{j, k, d} H_{k}(t) c_{j}(d , x) ,  \ \ \  ( x , t) \in [-1, 1] \times \mathbb{R}.
\end{split}
\end{equation}
We call the series $\{ \alpha_{k, j, d }\}_{j, k \in \mathbb{N}} \subset \mathbb{C} $ the {\it spatio-temporal Hermite power spectrum}.




\section{Regularity Properties}\label{Reg_Prop}

	This section is devoted to study the behaviour of the  kernel covariance functions associated to an isotropic stationary GRF $Z$.   It will be shown that the regularity of such kernels is closely related to the decay of the Hermite power spectrum or the angular power spectrum.   Moreover, the latter characterizes also the $(K-J)$-term truncation of a GRF $Z$ as in Equation (\ref{double_kl}).   
	We recall that we have introduced two expansions for the kernel covariance function of an isotropic stationary GRF $Z$:  		
\begin{itemize}
\item[i.-]    The Spatio-temporal power spectrum, by using a double Kaurhunen-Loève expansion according to formula (\ref{Ang_spa_time_cov}), valid for the kernel covariance function of any isotropic stationary GRF as in (\ref{double_kl}) over 
$\left( \mathbb{S}^{d} \times \mathbb{R} , \sigma_{d} \otimes  \mu \right) $ with $\mu$ the Lebesgue measure.

\item[ii.- ]  The Hermite power spectrum, by using Hermite polynomials according to formula (\ref{Her_spa_time_cov}), valid for the kernel covariance function of any isotropic stationary GRF $Z$ over $\left( \mathbb{S}^{d} \times \mathbb{R} , \sigma_{d} \otimes  \nu \right) $ with $\nu$ the standard Gaussian measure.
\end{itemize}	

We recall that, for all $\mathbf{x}, \mathbf{y} \in \mathbb{S}^{d}$ and $t, s \in \mathbb{R}$,
\begin{equation*} \label{Cov_relations}
{\rm cov}\left( Z({\bf x},t) , Z({\bf y},s) \right) = \psi(\cos \theta(\mathbf{x},\mathbf{y}), t-s ) = \psi (\langle \mathbf{x},\mathbf{y}\rangle_{\mathbb{R}^{d+1}}, t-s).
\end{equation*}

\begin{rem} Considering the relations between Gegenbauer and Legendre polynomials, in the case $d = 2$, the kernel (\ref{space_time_cov}) turns out to be
\begin{equation*} \label{Cov_S2}
\sum_{j = 0}^\infty \varphi_{j, 2}(t-s) (2j + 1) P_{j} ( \langle \mathbf{x},\mathbf{y}\rangle_{\mathbb{R}^{3}} ) = \psi (\langle \mathbf{x},\mathbf{y}\rangle_{\mathbb{R}^{3}}, t-s),  \ \ \  \mathbf{x},\mathbf{y} \in \mathbb{S}^{2}, \ t, s \in \mathbb{R}.
\end{equation*} 
\end{rem}

In what follows we present the regularity analysis of the kernel in terms of the behaviour of the two proposed expansions (\ref{Ang_spa_time_cov}) and (\ref{Her_spa_time_cov}). We address first the relation with the Hermite power spectrum (\ref{Her_spa_time_cov}).

\subsection{Regularity analysis for the Hermite expansion} \label{Her_Reg_Ana}

	In this part of the manuscript we consider the measure space $\left( \mathbb{R}, \nu \right) $ with $\nu$ the standard Gaussian measure. 
	
	For $n, m \in \mathbb{N}$ let the spaces $ H^{n}(-1,1) \subset L^{2}(-1,1)$ and $ H^{m}(\mathbb{R}) \subset L^{2}(\mathbb{R})$ be the standard Sobolev spaces. We extend the proposal in \cite{LS-2015} and  consider the function spaces $W^{n, m}:= W^{n,m} ((-1,1) \times \mathbb{R})$ as the closure of $H^{n}(-1,1) \times H^{m}(\mathbb{R})$ with respect to the weighted norm $\| \cdot \|_{W^{n,m}((-1,1) \times \mathbb{R})}$ given by
\begin{equation*} \label{Sov-norm-Her}
\displaystyle
\| f \|_{W^{n,m}}^{2} := \sum_{j =0}^{n} \sum_{k =0}^{m}  | f |_{W^{j, k}}^{2},
\end{equation*}
where for $j, k \in \mathbb{N}$ 
\begin{equation*} \label{Sov-sem-norm-Her}
\displaystyle
|f |_{W^{j, k}} ^{2} := \int_{\mathbb{R}} \int_{-1}^{1} \bigg| \frac{\partial^{k}}{\partial t^{k}} \frac{\partial ^{j}}{\partial x ^{j}} f (x, t) \bigg| ^{2} (1 - x ^{2})^{d/2 - 1 + j} {\rm d}x\ {\rm d}\nu (t).
\end{equation*}

Note that $\left( W^{n,n}, n\in \mathbb{N} \right)$ is a decreasing scale of separable Hilbert spaces,  i.e.
$$
L^{2}\left( (-1,1) \times \mathbb{R} \right) \cong
L^{2}(-1,1) \otimes L^{2}(\mathbb{R}) = W^{0,0} \supset W^{1,1} \supset \ldots \supset W^{n,n} \supset \ldots 
$$

We abuse of notation by writing $L^{2}$ instead of $ L^{2}((-1, 1) \times \mathbb{R})$, and we consider the canonical partial order relation in $\mathbb{N}^{2}$: $ (n,m) \leq (n', m') $ if and only if $ [n \leq n' \ \  \mbox{and} \ \ m \leq m']$.  

By Theorem 5.2 in \cite{MR2424078}, the norm of $W^{n,n}$ is equivalent to the first and the last element of the sum, i.e. 
\begin{equation*} \label{equiv-norm}
\| f \|_{W^{n,n}}^{2} \simeq \| f \|_{L^{2}}^{2} + | f |_{W^{n,n}}^{2} \ , \ \ f \in W^{n,n}.
\end{equation*}
	
We now derive another equivalent norm of ${W^{n,n}}$ in terms of summability of the spectrum. We first observe that, as the normalized Hermite polynomials $\{H_{k}\}_{k \in \mathbb{N}}$ constitute a orthonormal basis of $L^2(\mathbb{R})$, and that for any fixed $r > -1/2$ the Gengenbauer polynomials  $\{C_{j}^{r} \}_{j \in \mathbb{N}}$ a basis for $L^2(-1, 1)$, it is apparent that $\{ H_{k} \cdot C_{j}^{r} \}_{j, k \in \mathbb{N}}$ is a basis for   $L^{2}((-1, 1) \times \mathbb{R}) \cong L^{2}(-1,1) \otimes L^{2}(\mathbb{R})$. Therefore, any $f \in L^{2}$ can be expanded in the series 

\begin{equation} \label{Her_Leg_exp}
\displaystyle
f (x, t) = \sum_{k, j = 0}^{\infty} b_{k,j} H_{k}(t) C^{r}_{j}(x), \ \ \ (x,t) \in [-1,1] \times \mathbb{R},
\end{equation}
with
\begin{equation*}
\displaystyle
	b_{k,j} := \int_{\mathbb{R}} \int_{-1}^{1} f (x, t) H_{k}(t)  C^{r}_{j}(x) {\rm d}x{\rm d}\nu (t) , \ \ \  k, j \in \mathbb{N}_{0}.
\end{equation*}
Putting $\alpha_{j, k} := \frac{2j + d - 1}{d-1} b_{k,j} $, we get 
\begin{equation*} 
\displaystyle
f (x, t) = \sum_{j = 0}^{\infty} \dim (\mathcal{H}_{j}^{d}) \sum_{k = 0}^{\infty} \alpha_{j, k} H_{k}(t) c_{j}(d,x) , \ \ \ (x,t) \in [-1,1] \times \mathbb{R},
\end{equation*}
that is, $f$ can be written as a covariance kernel $\psi$ of the type (\ref{Her_spa_time_cov}).


This allows to tackle the problem in a different way.  Instead of using spectral techniques, regularity of kernels might be shown through an isomorphism between the spaces $W^{n,m} ((-1, 1) \times \mathbb{R})$ and the weighted square summable bi-sequence spaces 
\begin{equation}\label{BiSeq_Spaces}
	(k j)_{m, n} := \ell ^{2} \left( k^{m/2} \cdot j^{(d-1)/2 + n} \ ; \ \ k, j \in \mathbb{N} \right),
\end{equation}
where $\left\lbrace k^{m/2} \cdot j^{(d-1)/2 + n}; \ \ k, j \in \mathbb{N} \right\rbrace$ denotes the sequence of weights. 

From now on, for the sake of simplicity, we only consider  weighted Sobolev spaces $W^{n} := W^{n,n}$, and the weighted square summable bi-sequence spaces $(kj)_{n} := (k j)_{n, n}$, obtained as a special case of Equation (\ref{BiSeq_Spaces}) when $n = m$.  


In order to extend the isomorphism to spaces $W^{\eta}$ with $\eta$ being not an integer, following \cite{triebel1999interpolation} we now introduce the interpolation spaces $W^{\eta}:= W^{\eta}((-1, 1) \times \mathbb{R})$ for $n < \eta < n+1$, defined through:
\begin{equation*} \label{Inter-space}
\displaystyle
	 W^{\eta} := \left( W^{n}, W^{n+1} \right)_{\eta - n, 2} \ ,
\end{equation*}
equipped with the norm $\| f \|_{W^{\eta}} $ given by
\begin{equation*} \label{norm_inter_space}
\displaystyle
	\| f \|_{W^{\eta}}^{2} = \int_{0}^{\infty} \zeta^{-2(\eta -n)} | J(r, f) |^{2} \frac{{\rm d} \zeta}{\zeta},
\end{equation*}
where the functional $J$ is defined by
\begin{equation*} \label{J}
\displaystyle
	J (r, f) = \inf_{\substack{ f = v + w  \\ v \in W^{n}, \ w \in W^{n+1} }}  \left( \| v \|_{W^{n}} + \zeta \cdot \| w \|_{W^{n+1}} \right), \ \ \ \zeta > 0. 
\end{equation*}


The definition of the interpolation spaces $(kj)_{\eta}$ for $\eta$ non-integer is carried out in analogous way. 

The interpolation property (see section 2.4.1 in \cite{triebel1983theory}), implies that, if the spaces $W^{n}$ and $(kj)_{n}$ are isomorphic for all $n \in \mathbb{N}$, then they are isomorphic for all $\eta \in \mathbb{R}_{+}$. 


\begin{thm} \label{Reg_Her_exp}
Let $f \in L^{2}$ having expansion (\ref{Her_Leg_exp}) and $\eta \in \mathbb{R}_{+}$ be given. Then, $f  \in W^{\eta}$ {\it if and only if} 
$$
\displaystyle
\sum_{k, j = 0}^{\infty} | b_{k, j} | ^{2} k^{\eta} \cdot j ^{d - 1 + 2\eta } \ \ < \infty, $$
 i.e.
$$
\displaystyle
\| f \|_{W^{\eta}}^{2} \simeq 
 \sum_{k, j = 0}^{\infty} | b_{k, j} | ^{2}  k^{\eta} \cdot j ^{d - 1 + 2\eta }
$$
is an equivalent norm in $ W^{\eta}$.

For $\psi \in \ W^{n}$, $n \in \mathbb{N}$, the equivalence is reduced to: $\left\lbrace \alpha_{k, j} k^{n/2} j^{(d-1)/2 + n },\  k, j \in \mathbb{N} \right\rbrace$ is in $\ell^{2}(\mathbb{N}^{2})$ {\it if and only if}  $ \frac{\partial ^{n}}{\partial t ^{n}} \frac{\partial ^{n}}{\partial x ^{n}} \psi(x, t) (1 - x ^{2})^{(d - 2)/4 + n/2}$  is in  $L^{2} \left( (-1,1) \times \mathbb{R} \right) $, where $\left\lbrace \alpha_{k, j} \right\rbrace_{k ,j \in \mathbb{N} }$ is the space-time Hermite power-spectrum. Rephrased, 
\begin{equation*}
\displaystyle
	\sum_{k, j = n}^{\infty} | \alpha_{k, j} | ^{2} \  k^{n} \cdot j ^{d - 1 + 2n} < +\infty
\end{equation*}
{\it if and only if}
\begin{equation*}
\displaystyle
	\int_{\mathbb{R}} \int_{-1}^{1} \bigg| \frac{\partial ^{n}}{\partial t ^{n}}  \frac{\partial ^{n}}{\partial x ^{n}} \psi (x, t) \bigg| ^{2} (1 - x ^{2})^{d/2 -1 + n } \ {\rm d}x\  {\rm d}\nu (t) < +\infty. 
\end{equation*} 
\end{thm}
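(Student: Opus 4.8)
The plan is to establish the theorem in two stages: first construct the isomorphism $W^{n}\cong (kj)_{n}$ for every integer $n$, then bootstrap to non-integer $\eta$ via the interpolation property already invoked in the excerpt. For the integer case I would start from the Hermite--Legendre expansion $f(x,t)=\sum_{k,j} b_{k,j} H_k(t) P_j(x)$ of an arbitrary $f\in L^2$, and compute the seminorm $|f|_{W^{j_0,k_0}}$ defined in \eqref{Sov-sem-norm-Her} term by term. The key analytic inputs are the classical differentiation/recurrence identities: for Legendre polynomials one has the Rodrigues-type relation showing that $\frac{\mathrm d^{j_0}}{\mathrm dx^{j_0}}P_j(x)$, multiplied by the weight $(1-x^2)^{j_0/2}$, is (up to a bounded factor comparable to $j^{j_0}$) an orthogonal family in $L^2(-1,1)$ — concretely, the associated Legendre functions $P_j^{(j_0)}$ satisfy $\int_{-1}^1 (P_j^{(j_0)})^2\,\mathrm dx = \frac{(j+j_0)!}{(j-j_0)!}\cdot\frac{2}{2j+1}$, and $\frac{(j+j_0)!}{(j-j_0)!}\asymp j^{2j_0}$. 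Analogously, for the normalized Hermite polynomials with respect to the Gaussian measure $\nu$, the identity $H_k' = \sqrt{k}\,H_{k-1}$ gives $\int_{\mathbb R}(H_k^{(k_0)})^2\,\mathrm d\nu = \frac{k!}{(k-k_0)!}\asymp k^{k_0}$. Substituting the expansion into \eqref{Sov-sem-norm-Her} and using orthogonality in both variables collapses the double integral to $\sum_{k\ge k_0,\,j\ge j_0} b_{k,j}^2\,\frac{k!}{(k-k_0)!}\,\frac{(j+j_0)!}{(j-j_0)!}$, which is comparable to $\sum b_{k,j}^2\,k^{k_0}\,j^{2j_0}$.

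Summing these seminorm identities over $0\le j_0,k_0\le n$ and invoking the norm equivalence \eqref{equiv-norm} (only the top-order term $|f|_{W^{n,n}}$ together with $\|f\|_{L^2}^2=\sum b_{k,j}^2$ matters), I obtain $\|f\|_{W^n}^2 \asymp \sum_{k,j} b_{k,j}^2 (1+k)^n(1+j)^{2n}$, which is exactly the statement that the map $f\mapsto (b_{k,j})$ is an isomorphism $W^n\to (kj)_n = \ell^2(k^{n/2}j^n)$. This yields the ``reduced'' equivalence in the last part of the statement once we identify $b_{k,j}$ with the Hermite power-spectrum coefficients $\alpha_{k,j}$ of $\psi_I$ from \eqref{Her_spa_time_cov}: for $f=\psi_I$ the expansion \eqref{Her_Leg_exp} coincides with \eqref{Her_spa_time_cov}, so the top-order seminorm $|\psi_I|_{W^{n,n}}$ is precisely the displayed integral $\int_{\mathbb R}\int_{-1}^1 (\partial_t^n\partial_x^n\psi_I)^2(1-x^2)^n\,\mathrm dx\,\mathrm d\nu$, and $W^n$-membership is equivalent to finiteness of $\sum_{k,j\ge n}\alpha_{k,j}^2 k^n j^{2n}$.

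For non-integer $\eta$ with $n<\eta<n+1$, I would argue exactly as in \cite{LS-2015}: the isomorphism $T:f\mapsto(b_{k,j})$ intertwines the real interpolation functors, so $T$ maps $W^\eta=(W^n,W^{n+1})_{\eta-n,2}$ isomorphically onto $\big((kj)_n,(kj)_{n+1}\big)_{\eta-n,2}$, and by the standard identification of real interpolation spaces of weighted $\ell^2$ spaces the latter equals $(kj)_\eta=\ell^2(k^{\eta/2}j^\eta)$; this is the interpolation property quoted just before the theorem (see section 2.4.1 in \cite{triebel1983theory}). Combining, $\|f\|_{W^\eta}^2\asymp\sum_{k,j}b_{k,j}^2 k^\eta j^{2\eta}$, and in particular $f\in W^\eta$ iff this sum is finite. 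I expect the main obstacle to be the bookkeeping of the weight $(1-x^2)^{j_0}$ in the Legendre seminorm: one must verify carefully that $(1-x^2)^{j_0/2}\frac{\mathrm d^{j_0}}{\mathrm dx^{j_0}}P_j$ is genuinely orthogonal (so that no cross terms survive) and that the normalization constants are two-sidedly comparable to $j^{2j_0}$ uniformly in $j\ge j_0$ — the factorial ratio $\frac{(j+j_0)!}{(j-j_0)!}$ degenerates for small $j$, which is why the sums are taken over $j\ge j_0$ (equivalently $j\ge n$), and a separate elementary argument is needed to absorb the finitely many low-order terms into the $L^2$ part of the norm via \eqref{equiv-norm}.
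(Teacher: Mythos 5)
Your proposal is correct and follows essentially the same route as the paper's proof: establish the integer-order isomorphism $W^{n}\cong (kj)_{n}$ by inserting the Hermite--Legendre expansion into the seminorm, using orthogonality of the differentiated families together with the factorial-ratio asymptotics $\frac{k!}{(k-n)!}\asymp k^{n}$ (the paper does this via Stirling) and $j^{2n}$ for the Legendre part (the paper cites the corresponding step of Lang--Schwab), and then pass to non-integer $\eta$ by real interpolation, identifying the interpolated weighted $\ell^{2}$ spaces via Stein--Weiss. The only cosmetic difference is the order of the two stages and that you spell out the associated-Legendre normalization explicitly where the paper defers to \cite{LS-2015}.
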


\begin{proof}
Assume first that the claim is already proved for $\eta \in \mathbb{N}$, i.e., $W^{n}$ is isomorphic to the weighted bi-sequence space $(k j)_{n}$ for all $n \in \mathbb{N}$.

Fix $n \in \mathbb{N}$, let $n < \eta < n+1$ and set $\kappa := \eta - n$. By the interpolation theorem of Stein-Weiss (see Theorem 5.4.1 in \cite{MR0482275}), the weights of $(k j)_{\eta}$ are given by
\begin{equation*}
	\left( k^{n} j ^{d -1 + 2 n } \right)^{1-\kappa} \cdot \left(  k^{n+1} j ^{d - 1 + 2 (n + 1) } \right)^{\kappa} = k^{\eta} \ j^{d - 1 + 2\eta }.
\end{equation*}

Now, we prove the isomorphism between $W^{n}$ and $(k j)_{n}$ for $n \in \mathbb{N}$, which is equivalent to prove the second formulation of the theorem. We have that,
\begin{eqnarray} \label{KL-iso-1}
\displaystyle
\nonumber
	& & \int_{\mathbb{R}} \int_{-1}^{1} \bigg| \frac{\partial ^{n}}{\partial t ^{n}} \frac{\partial ^{n}}{\partial x ^{n}} \psi (x, t) \bigg| ^{2} (1 - x ^{2})^{d/2 - 1 + n } \ {\rm d} x\ {\rm d}\nu (t) \\
\nonumber	
    &=& \int_{\mathbb{R}} \int_{-1}^{1} \bigg| \sum_{j = 0}^{\infty} \dim (\mathcal{H}_{j}^{d}) \sum_{k = 0}^{\infty}  \alpha_{k, j} \frac{\partial ^{n}}{\partial t ^{n}} H_{k}(t) \frac{\partial ^{n}}{\partial x ^{n}} c_{j} (d , x)  \bigg| ^{2} (1 - x ^{2})^{d/2 - 1 + n } \ {\rm d}x \ {\rm d}\nu (t)  \\
\nonumber    
	&=&  \sum_{k, k', j, j ' = 0}^{\infty} \alpha_{k, j} \ \overline{ \alpha_{k', j '} } \int_{\mathbb{R}} \frac{\partial ^{n}}{\partial t ^{n}} H_{k}(t) \frac{\partial ^{n}}{\partial t ^{n}} H_{k'}(t) \ {\rm d}\nu (t) 
\\  \nonumber
	& & \ \cdot \int_{-1}^{1} \frac{\partial ^{n}}{\partial x ^{n}} c_{j} (d , x) \dim (\mathcal{H}_{j}^{d}) \frac{\partial ^{n}}{\partial x ^{n}} c_{j '}(d , x)  \dim (\mathcal{H}_{j'}^{d})  (1 - x ^{2})^{d/2 - 1 + n } \ {\rm d} x \\
	&=& \sum_{k, k', j, j ' = 0}^{\infty} \alpha_{k, j} \ \overline{ \alpha_{k', j '} } \cdot I_{k, k'} \cdot \widetilde{I}_{j, j '}  ,
\end{eqnarray}
where 
\begin{equation*}
	I_{k, k'} = \int_{\mathbb{R}} \frac{\partial ^{n}}{\partial t ^{n}} H_{k}(t) \frac{\partial ^{n}}{\partial t ^{n}} H_{k'}(t) \ {\rm d}\nu (t)  \ \ \ 
\end{equation*}
and
\begin{equation*}
	\widetilde{I}_{j, j '}  = \int_{-1}^{1} \frac{\partial ^{n}}{\partial x ^{n}} c_{j} (d , x) \dim (\mathcal{H}_{j}^{d}) \frac{\partial ^{n}}{\partial x ^{n}} c_{j '} (d , x) \dim (\mathcal{H}_{j'}^{d}) (1 - x ^{2})^{d/2 - 1 + n } \ {\rm d}x.
\end{equation*}

Standard properties of normalized Hermite polynomials show that $I_{k, k'} = 0$ if $k < n$, and 
\begin{equation*}
\displaystyle
	I_{k, k'} = \frac{k!}{(k-n)!} \delta_{k, k'}  \ ;\ \ k \geq n.
\end{equation*}
On one hand, Stirling inequality 
\begin{equation*}
	\sqrt{2\pi} \ k^{k + 1/2} \ e^{-k} \leq k! \leq  k^{k + 1/2} \ e^{-k + 1},
\end{equation*}
implies that
\begin{equation*}
	\sqrt{2\pi} \ e^{-(n+1)} \frac{k^{k + 1/2}}{(k-n)^{k-n + 1/2}} \leq \frac{k!}{(k-n)!} \leq \frac{e^{-n+1} }{\sqrt{2\pi}} \ \frac{k^{k + 1/2}}{(k-n)^{k-n + 1/2}}.
\end{equation*}
On the other hand,  
\begin{equation*}
	\frac{k^{k}}{(k-n)^{k-n}} = \frac{k^{n}}{(1 - n/k)^{k(1 - n/k)}},
\end{equation*}
where for $k \geq n$, 
\begin{equation*}
	\frac{1}{(1 - n/k)^{k - n}} \ \xrightarrow[k  \to \infty]{} e^{-n}.
\end{equation*}
Hence, for $k \geq n$, there exists constants $c_{1}(n)$ and $c_{2}(n)$ such that 
\begin{equation*}
	c_{1}(n) k^{n} \leq \frac{k!}{(k-n)!} \leq c_{2}(n) k^{n}.
\end{equation*}

Therefore, 
\begin{equation} \label{KL-iso-2}
\displaystyle
	I_{k, k'} \simeq k^{n} \delta_{k, k'}  \ ,\ \ k \geq n.
\end{equation}

For $I_{j, j '}(x)$, expressions (\ref{Gegen-ortho-2}), (\ref{Gegen-deriv}), (\ref{Gegen-norm}) and (\ref{Gegen-dim-eq}) allow to conclude that $I_{j, j '}(x) = 0$ if $j < n$, and 
\begin{equation} \label{KL-iso-3}
\displaystyle
	I_{j, j '}(x) \simeq j ^{d - 1 + 2n } \delta_{j, j '} \ ,\ \ j \geq n.
\end{equation}


Therefore, from (\ref{KL-iso-1}), (\ref{KL-iso-2}) and (\ref{KL-iso-3}) we deduce that 
\begin{eqnarray*}
\displaystyle
\begin{split}
& \int_{\mathbb{R}} \int_{-1}^{1} \bigg| \frac{\partial ^{n}}{\partial t ^{n}} \frac{\partial ^{n}}{\partial x ^{n}} \psi (x, t) \bigg| ^{2} (1 - x ^{2})^{d/2 - 1 + n } \ {\rm d}x\ {\rm d}\nu (t) \ 
\\
& \simeq \  \sum_{k, j = n}^{\infty} | \alpha_{k, j} | ^{2} \ k^{n} \ j^{d - 1 + 2n},
\end{split}
\end{eqnarray*}
which concludes the proof. 
\end{proof}

\subsection{Regularity analysis for the double Karhunen-Loève expansion}

	With the exception of minor details on the definitions of the spaces, this part of the manuscript follows similarly to Section \ref{Her_Reg_Ana}. On the other hand, we now consider the measure space $\left( \mathbb{R}, \mu \right) $ with $\mu$ the Lebesgue measure. 


For $n \in \mathbb{N}$ consider the function spaces $V_{T}^{n}:= V_{T}^{n} ((-1, 1)\times \mathbb{R})$ as the closure of $H^{n}(-1,1) \times L^{2}(\mathbb{R})$ with respect to the weighted norms $\| \cdot \|_{V_{T}^{n}}$ given by
\begin{equation*} \label{Sov-norm}
\displaystyle
\| f \|_{V_{T}^{n} ((-1, 1)\times \mathbb{R})}^{2} := \sum_{j=0}^{n} | f |_{V_{T}^{j} ((-1, 1)\times \mathbb{R})}^{2},
\end{equation*}
where for $j \in \mathbb{N}$ 
\begin{equation*} \label{Sov-sem-norm}
\displaystyle
|f |_{V_{T}^{j}} ^{2} := \int_{\mathbb{R}} \int_{-1}^{1} \bigg| \frac{\partial ^{j}}{\partial x ^{j}} f (x, t) \bigg| ^{2} (1 - x^{2})^{d/2 - 1 + j } {\rm d}x\  {\rm d}\mu (t) .
\end{equation*}
$\left( V_{T}^{n} \right)_{ n\in \mathbb{N}}$ is a decreasing sequence of separable Hilbert spaces, and  
\begin{equation*} \label{equiv-norm-1}
\displaystyle
\| f \|_{V_{T}^{n}}^{2} \simeq \| f \|_{L^{2}}^{2} + | f |_{V_{T}^{n}}^{2}, \ \ \ f \in V_{T}^{n}.
\end{equation*}


Now we look at the weighted square summable bi-sequence spaces 
$$
(kj)_{T, n} := \ell ^{2} \left( \left( j^{(d-1)/2 + n }; \ \ k, j \in \mathbb{N} \right) \right).
$$

As in Section \ref{Her_Reg_Ana}, we consider the interpolation spaces $V_{T}^{\eta}$ for $n < \eta < n+1$. The proof of next result follows exactly the same lines as the Theorem \ref{Reg_Her_exp}, hence it is omitted.

\begin{thm}
Let $f \in L^{2}$ and $\eta \in \mathbb{R}_{+}$ be given.   Then, $f  \in V_{T}^{\eta}$ if and only if 
$$
\displaystyle
\sum_{k, j = 0}^{\infty} b_{k, j}^{2} j^{d - 1 + 2\eta} < \infty, $$
i.e.
$$
\displaystyle
\| f \|_{V_{T}^{\eta}}^{2} \simeq \sum_{k, j = 0}^{\infty} b_{k, j}^{2} j^{d - 1 + 2\eta}
$$
is an equivalent norm in $ V_{T}^{\eta}$.

For $\psi \in \ V_{T}^{n}$, $n \in \mathbb{N}$,  the equivalence is reduced to: $\left\lbrace a_{k, j} \ j^{(d-1)/2 + n},\  k, j \in \mathbb{N} \right\rbrace$ is in $\ell^{2}(\mathbb{N}^{2})$ if and only if $ \frac{\partial ^{n}}{\partial x ^{n}} \psi(x, t) (1 - x ^{2})^{(d-2)/4 + n/2}$ is in  $L^{2}( (-1,1) \times \mathbb{R} )$, where $\left\lbrace a_{k, j} \right\rbrace_{k , j \in \mathbb{N} }$ is the space-time angular power-spectrum. Rephrased,
\begin{equation*}
\displaystyle
	\sum_{j = n}^{\infty} \sum_{k = 0}^{\infty} a_{k, j}^{2} \ j^{d - 1 + 2n}  < +\infty
\end{equation*}
if and only if
\begin{equation*}
\displaystyle
	\int_{\mathbb{R}} \int_{-1}^{1} \bigg| \frac{\partial ^{n}}{\partial x ^{n}} \psi (x, t) \bigg| ^{2} (1 - x ^{2})^{d/2 - 1 + n } {\rm d}x\  {\rm d}\nu (t) < +\infty. 
\end{equation*} \\
\end{thm}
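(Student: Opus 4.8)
The plan is to follow verbatim the architecture of the proof of Theorem~\ref{Reg_Her_exp}, the only structural simplification being that the spaces $V_{T}^{n}$ carry no temporal derivative, so in the key double sum the temporal factor contributes an identity Gram matrix rather than a $k$-dependent one. First I would dispose of the non-integer case by reduction to integers: granting the isomorphism $V_{T}^{n}\cong (kj)_{T,n}$ for every $n\in\mathbb{N}$, the interpolation property (section~2.4.1 in \cite{triebel1983theory}) together with the Stein--Weiss interpolation theorem (Theorem~5.4.1 in \cite{MR0482275}) handles $n<\eta<n+1$ exactly as in Theorem~\ref{Reg_Her_exp}: writing $\kappa:=\eta-n$, the squared weights $j^{2n}$ of $(kj)_{T,n}$ and $j^{2(n+1)}$ of $(kj)_{T,n+1}$ interpolate to $\left(j^{2n}\right)^{1-\kappa}\left(j^{2(n+1)}\right)^{\kappa}=j^{2\eta}$, the squared weight of $(kj)_{T,\eta}\cong V_{T}^{\eta}=\left(V_{T}^{n},V_{T}^{n+1}\right)_{\kappa,2}$, which yields the equivalent norm $\sum_{k,j}b_{k,j}^{2}j^{2\eta}$.

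For the integer case I would expand an arbitrary $f\in L^{2}$ in the orthonormal basis $\{\varepsilon_{k}\otimes P_{j}\}_{k,j}$ of $L^{2}((-1,1)\times\mathbb{R},{\rm d}x\,{\rm d}\mu)$ --- a basis because $\{\varepsilon_{k}\}$ is an orthonormal basis of $L^{2}(\mathbb{R},\mu)$ and the normalized Legendre polynomials $\{P_{j}\}$ one of $L^{2}(-1,1)$ --- as $f(x,t)=\sum_{k,j}b_{k,j}\varepsilon_{k}(t)P_{j}(x)$, differentiate $n$ times in $x$ only, square, and integrate against $(1-x^{2})^{n}\,{\rm d}x\,{\rm d}\mu$. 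Interchanging summation and integration, justified (as in Theorem~\ref{Reg_Her_exp}) by first establishing the identity for the finite partial sums $\sum_{k,j\le N}$ and letting $N\to\infty$ using that $f\in V_{T}^{n}$ controls the tail, gives
\[
|f|_{V_{T}^{n}}^{2}=\sum_{k,k',j,j'}b_{k,j}\,b_{k',j'}\left(\int_{\mathbb{R}}\varepsilon_{k}(t)\varepsilon_{k'}(t)\,{\rm d}\mu\right)\left(\int_{-1}^{1}\frac{\partial^{n}}{\partial x^{n}}P_{j}(x)\,\frac{\partial^{n}}{\partial x^{n}}P_{j'}(x)\,(1-x^{2})^{n}\,{\rm d}x\right).
\]
The temporal integral equals $\delta_{k,k'}$ by orthonormality of $\{\varepsilon_{k}\}$ --- this is precisely why no power of $k$ survives --- while the spatial integral vanishes for $j<n$ and satisfies $I_{j,j'}(x)\simeq j^{2n}\delta_{j,j'}$ for $j\ge n$, exactly the Legendre asymptotic obtained in the second step of the proof of Theorem~3.1 in \cite{LS-2015} (the same one invoked in Theorem~\ref{Reg_Her_exp}). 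Substituting, $|f|_{V_{T}^{n}}^{2}\simeq\sum_{k,\,j\ge n}b_{k,j}^{2}\,j^{2n}$, and combining with $\|f\|_{V_{T}^{n}}^{2}\simeq\|f\|_{L^{2}}^{2}+|f|_{V_{T}^{n}}^{2}$ from (\ref{equiv-norm-1}) and Parseval's identity $\|f\|_{L^{2}}^{2}=\sum_{k,j}b_{k,j}^{2}$ gives the isomorphism $V_{T}^{n}\cong(kj)_{T,n}$; specializing $f=\psi_{I}$, $b_{k,j}=a_{k,j}$, gives the concrete reformulation in the statement.

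I do not expect a genuine obstacle, since every ingredient --- orthonormality (trivial here for the temporal part), the already-cited Legendre derivative asymptotics, and the interpolation machinery --- was already deployed for Theorem~\ref{Reg_Her_exp}. The only points requiring a line of care are the justification of term-by-term differentiation and integration (handled by the truncation argument above) and the harmless bookkeeping at the index $j=0$, where the weight $j^{2\eta}$ should be read as $\max(1,j^{2\eta})$ so that the $\|f\|_{L^{2}}^{2}$ contribution is absorbed into the weighted sum; neither affects the statement as written.
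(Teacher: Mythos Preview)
Your proposal is correct and follows exactly the approach the paper intends: the paper itself omits the proof, stating only that it ``follows exactly the same lines as the Theorem~\ref{Reg_Her_exp},'' and your argument is precisely that, with the obvious simplification that the temporal Gram integral collapses to $\delta_{k,k'}$ by orthonormality of $\{\varepsilon_k\}$ so no power of $k$ appears in the weight. Your handling of the interpolation step via Stein--Weiss and of the Legendre derivative asymptotics $I_{j,j'}\simeq j^{2n}\delta_{j,j'}$ mirrors the paper's proof of Theorem~\ref{Reg_Her_exp} verbatim.
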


\begin{rem}

	By taking into account the normalizing constants, all the previous results encompasses the results in Section 3 of \cite{LS-2015}.

\end{rem}





\section{Spatio-Temporal Spectral Simulation}
\label{Simul}

We now study a spectral simulation method for random fields on $\mathbb{S}^2\times [0,T]$, where $T$ denotes the time horizon. For a neater exposition, along this section, we omit the subscripts associated to the spatial dimension $d=2$.  

We must  first introduce some notation. Let $j\in\mathbb{N}_0$ and $m\in\{0,\hdots,j\}$. For $ x \in [-1,1]$, the associated Legendre polynomials $P_{j,m}$  are defined through
\begin{equation*}
 \label{legendre_functions}
\begin{split}
P_{j,m} ( x ) \ =& \   (-1)^m (1 - x ^2)^{m/2} \frac{\text{d}^m}{\text{d} x^m}(P_j( x )), 
\\
	=& \ (2m - 1)!! (-1)^{m} (1 - x^2)^{m/2} C_{j - m}^{m + 1/2}( x ).
\end{split}
\end{equation*}
The spherical harmonic basis functions,  $\mathcal{Y}_{j,m}: \mathbb{S}^2\mapsto  \mathbb{C}$,  are defined by
\begin{equation*} \label{spherical_harmonics}
\begin{split}
\displaystyle 
\mathcal{Y}_{j,m} (\textbf{x}) \ =&  \  \sqrt{\frac{2j +1}{4\pi} \frac{(j-m)!}{(j+m)!}} P_{j,m}(\cos \beta_1) \exp(im \beta_2),  \quad  j\in\mathbb{N}_0, m\in\{0,\hdots,j\} 
\\
\displaystyle 
\mathcal{Y}_{j,m} (\textbf{x}) \ =& \   (-1)^m \overline{\mathcal{Y}_{j,-m}}(\textbf{x}),    \quad       j\in\mathbb{N}, m\in\{-j,\hdots,-1\}.
\end{split}
\end{equation*}
where  $(\beta_1,\beta_2) \in [0,\pi]\times[0,2\pi)$ represents the spherical coordinates of $\textbf{x}\in\mathbb{S}^2$. 

On the other hand, let  $\{X_{j,m}(t), j\in\mathbb{N}_0, m\in\{-j,\hdots,j\} \}$ be a collection of  stochastic processes.  Thus, we consider the space-time random field
\begin{equation} 
\label{double_kl_1}
 {Z}(\mathbf{x},t)    =      \sum_{j=0}^{\infty}    \sum_{m=-j}^{j}   X_{j,m}(t)   \mathcal{Y}_{j,m}(\mathbf{x}), \qquad \mathbf{x}\in\mathbb{S}^2,t\in[0,T].
   \end{equation}

In order to obtain a real-valued field, we must impose some conditions on the stochastic processes $\{X_{j,m}(t), j\in\mathbb{N}_0, m\in\{0,\hdots,j\} \}$. Throughout, we assume that   $\{X_{j,m}(t), j\in\mathbb{N}_0, m\in\{0,\hdots,j\} \}$ are mutually independent,  with  $\Im(X_{j,0}(t))$ being  identically equal to zero, and  for $j\in\mathbb{N}$ and $m\in\{-j,\hdots,-1\}$,  
\begin{equation}
\label{condicion}
X_{j,m}(t) = (-1)^m \overline{X_{j,-m}}(t).
\end{equation}

Note that, using standard algebra of complex numbers, coupled with condition (\ref{condicion}), Equation (\ref{double_kl_1}) can be written as
\begin{eqnarray} \label{eq_RF}
\displaystyle
\begin{split}
 Z(\mathbf{x},t)  =&  \sum_{j=0}^{\infty}         \Bigg( X_{j,0}(t)   \mathcal{Y}_{j,0}(\textbf{x})  \\
	& +   \sum_{m=1}^j  \bigg\{ X_{j,m}(t)   \mathcal{Y}_{j,m} (\textbf{x})   + (-1)^m\overline{X_{j,m}}(t)  (-1)^m \overline{\mathcal{Y}_{j,m}}(\textbf{x})  \bigg\}       \Bigg) \\
   = & \sum_{j=0}^{\infty} \Bigg(   X_{j,0}(t)   \mathcal{Y}_{j,0}(\textbf{x})  \\
	& +  2 \sum_{m=1}^j  \bigg\{\Re( X_{j,m}(t))   \Re(\mathcal{Y}_{j,m} (\textbf{x}))  - \Im( X_{j,m}(t))   \Im(\mathcal{Y}_{j,m} (\textbf{x})) \bigg\} 
     \Bigg).
\end{split}
\end{eqnarray}   

We  consider  $\Re(X_{j,m}(t))$ and $\Im(X_{j,m}(t))$ as independent processes with the following Fourier expansions 
\begin{eqnarray*}
   \Re(X_{j,m}(t))   &  =  & A^1_{j, 0, m} + \sum_{k=1}^\infty   \left\{  A^1_{j, k, m} \cos\left(\frac{\pi k t}{2T}\right) +   B^1_{j, k, m}  \sin\left(\frac{\pi k t}{2T}\right) \right\} , \\
      \Im(X_{j,m}(t))   &  =  & A^2_{j, 0, m} + \sum_{k=1}^\infty   \left\{ A^2_{j, k, m}  \cos\left(\frac{\pi k t}{2T}\right) +   B^2_{j, k, m}  \sin\left(\frac{\pi k t}{2T}\right)  \right\}.
   \end{eqnarray*}
Here,  $\{A_{j, k, m}^q\}$ and $\{B_{j, k, m}^q\}$, for $q=1,2$, are  sequences of  independent centred real-valued Gaussian random variables, such that 
\begin{equation*}
{\rm var} (A^1_{j, k, 0}) = {\rm var}( B^1_{j, k, 0} )= a_{j, k}
\end{equation*}
 and    
 \begin{equation*}
 {\rm var} (A^q_{j, k, m}) = {\rm var} ( B^q_{j, k, m})  =   a_{j, k}/2, \quad \text{ for } m\neq 0,
 \end{equation*} 
 where $\{a_{j, k}\}_{k , j \in \mathbb{N} }$ is a summable  bi-sequence of non-negative coefficients. A direct calculation shows that the covariance function of $Z(\textbf{x},t)$ is  spatially isotropic and temporally stationary. More precisely, we have that
$$  {\rm cov}\{Z(\textbf{x},t), Z(\textbf{y}, s)\} = \sum_{j=0}^\infty \sum_{k=0}^\infty  \frac{ ( 2j+1 ) }{4\pi}   a_{j, k} \cos\left(\frac{\pi k (t - s )}{2T} \right) c_{j}(\langle \textbf{x}, \textbf{y} \rangle _{\mathbb{R}^3} ), \  \textbf{x}, \textbf{y}  \in \mathbb{S}^{2}. $$

Finally, given  two positive integers $J$ and $K$,  we truncate expression (\ref{eq_RF}) in the index $j$ and $k$, respectively. Thus, we  simulate a space-time Gaussian random field on $\mathbb{S}^2\times[0,T]$  using the  explicit approximation
\begin{eqnarray}\label{trunc}
\displaystyle
\begin{split}
 & \widehat{Z}(\mathbf{x},t)  =  \sum_{j=0}^{J}         \Bigg( A^1_{j, 0, 0}  \widetilde{P}_{j,0}(\cos \beta_1)
  \\
	+ &    2\sum_{m=1}^j   \widetilde{P}_{j,m}(\cos \beta_1)\bigg\{ A_{j, 0, m}^1     \cos(m\beta_2)  -     A_{j, 0, m}^2    \sin(m\beta_2)  \bigg\}  \\
	+ & \widetilde{P}_{j,0}(\cos \beta_1)  \sum_{k=1}^K   \bigg\{ A^1_{j, k, 0} \cos\left(\frac{\pi k t}{2T}\right)  + B^1_{j , k, 0} \sin\left(\frac{\pi k t}{2T}\right)  \bigg\}  \\
	+ & 2\sum_{m=1}^j     \widetilde{P}_{j,m}(\cos \beta_1) \cos(m\beta_2)  \sum_{k=1}^K \bigg\{   
A^1_{j, k, m} \cos\left(\frac{\pi k t}{2T} \right)         +  B^1_{j, k, m} \sin\left(\frac{\pi k t}{2T}\right)   \bigg\}    \\
	- & 2\sum_{m=1}^j     \widetilde{P}_{j,m}(\cos \beta_1) \sin(m\beta_2)  \sum_{k=1}^K \bigg\{   
A^2_{j, k, m} \cos\left(\frac{\pi k t}{2T} \right)         +  B^2_{j, k, m} \sin\left(\frac{\pi k t}{2T}\right)   \bigg\}    
 \Bigg),
 \end{split}
\end{eqnarray}   

where  $\widetilde{P}_{j,m}(\cdot) = \sqrt{(2j+1)(j-m)! /  (4\pi(j+m))! } P_{j,m}(\cdot)$. 


	We assess the $L^2(\Omega\times\mathbb{S}^2\times[0,T])$ error associated to  the truncated expansion $\widehat{Z}(\textbf{x},t)$ given in Equation  (\ref{trunc}), in terms of the positive integers $J$ and $K$.   We follow the scheme used by \cite{LS-2015} in the spatial context and extend their result to the space-time case.    Next, we state the main result of this section.    

\begin{thm}
\label{accurate}
Let $\nu_1, \nu_2 \geq  2$.  Suppose that there exist positive constants $C_i$, for $i=1,2,3$,  and positive integers $j_0$ and $k_0$, such that  $a_{j, 0} \leq C_1 j^{-\nu_1}$, $a_{0, k} \leq C_2 k^{-\nu_2}$ and  $a_{j, k} \leq C_3  j^{-\nu_1} k^{-\nu_2}$, for all $j\geq j_0$ and $k\geq k_0$. Then,  the following inequality holds
\begin{equation}
\label{bound_error}
 \|   Z(\mathbf{x},t) - \widehat{Z}(\mathbf{x},t) \|^2_{L^2(\Omega\times\mathbb{S}^2\times[0,T])}  \leq \widetilde{C}_1 J^{-(\nu_1-2)} + \widetilde{C}_2 J K^{-(\nu_2-1)},
\end{equation}
for some positive constants $\widetilde{C}_1$ and $\widetilde{C}_2$.
\end{thm}

\begin{proof}

We  decompose ${Z}(\mathbf{x},t)-\widehat{Z}(\mathbf{x},t)$ in Equation (\ref{trunc}) into two independent terms, $${Z}(\mathbf{x},t)-\widehat{Z}(\mathbf{x},t) = T_1(\mathbf{x}) + T_2(\mathbf{x},t),$$
 where
\begin{eqnarray*} \label{termino1}
\displaystyle
\begin{split}
T_1(\mathbf{x})  = &   \bigg( \sum_{j=J+1}^{\infty}    A^1_{j, 0, 0}  \widetilde{P}_{j,0}(\cos \beta_1) \\
& +    2\sum_{m=1}^j   \widetilde{P}_{j,m}(\cos \beta_1)\bigg\{ A_{j, 0, m}^1     \cos(m\beta_2)  -     A_{j, 0, m}^2    \sin(m\beta_2)  \bigg\} \bigg)
\end{split}
\end{eqnarray*}
and 
\begin{equation*} \label{termino2}
\begin{split}
T_2(\mathbf{x},t) =  
\sum_{j=J+1}^\infty \sum_{k=1}^\infty  \Delta_{j, k}(\mathbf{x},t)  +   \sum_{j=0}^J \sum_{k=K+1}^\infty \Delta_{j, k}(\mathbf{x},t),
 \end{split}
\end{equation*}   
with  $\Delta_{j, k}(\mathbf{x},t)$ defined as
\begin{eqnarray*}
\displaystyle
\begin{split}
 & \Delta_{j, k}(\mathbf{x},t)  =   
\widetilde{P}_{j,0}(\cos \beta_1)      \bigg\{ A^1_{j, k, 0} \cos\left(\frac{\pi k t}{2T}\right)  + B^1_{j, k, 0} \sin\left(\frac{\pi k t}{2T}\right)  \bigg\}  \\
& + 2\sum_{m=1}^j     \widetilde{P}_{j,m}(\cos \beta_1) \cos(m\beta_2)  \bigg\{   
A^1_{j, k, m} \cos\left(\frac{\pi k t}{2T} \right)        +  B^1_{j, k, m} \sin\left(\frac{\pi k t}{2T}\right)   \bigg\}    \\
& - 2\sum_{m=1}^j     \widetilde{P}_{j,m}(\cos \beta_1) \sin(m\beta_2)   \bigg\{   
A^2_{j, k, m} \cos\left(\frac{\pi k t}{2T} \right)        +  B^2_{j, k, m} \sin\left(\frac{\pi k t}{2T}\right) \bigg\} .
\end{split}   
\end{eqnarray*}   
For the second term, we have used the identity
$$  \sum_{j=0}^\infty \sum_{k=1}^\infty   \xi_{j, k} -   \sum_{j=0}^J \sum_{k=1}^K \xi_{j, k}  =      \sum_{j=J+1}^\infty \sum_{k=1}^\infty  \xi_{j, k} +   \sum_{j=0}^J \sum_{k=K+1}^\infty \xi_{j, k},$$ 
which is satisfied for any  summable bi-sequence $\left\lbrace \xi_{j, k} \right\rbrace_{j , k \in \mathbb{N} }$.

The independence of $T_1(\mathbf{x})$ and $T_2(\mathbf{x},t)$ implies that 
$$\|   Z(\mathbf{x},t) - \widehat{Z}(\mathbf{x},t) \|^2_{L^2(\Omega\times\mathbb{S}^2\times[0,T])}   =   \|   T_1(\mathbf{x})  \|^2_{L^2(\Omega\times\mathbb{S}^2)} +   \|   T_2(\mathbf{x},t) \|^2_{L^2(\Omega\times\mathbb{S}^2\times[0,T])}.$$

In \cite{LS-2015} it is shown that there exists a positive constant $L_1$, depending on $C_1$, such that $$ \|   T_1(\mathbf{x})  \|^2_{L^2(\Omega\times\mathbb{S}^2)}   \leq  L_1 J^{-(\nu_1-2)}.$$

On the other hand, since $$\|  \cos(\pi k t/(2T))   \|^2_{L^2([0,T])}  + \|   \sin(\pi k t/(2T))    \|^2_{L^2([0,T])} = T,$$
 $$\|  \widetilde{P}_{j,0}(\cos \beta_1)     \|^2_{L^2(\mathbb{S}^2)}  = 1$$ 
 and 
 $$\|   \widetilde{P}_{j,m}(\cos \beta_1) \cos(m\beta_2)   \|^2_{L^2(\mathbb{S}^2)} + \|   \widetilde{P}_{j,m}(\sin \beta_1) \cos(m\beta_2)  \|^2_{L^2(\mathbb{S}^2)} = 1,$$
we have that  $\| \Delta_{j, k}(\mathbf{x},t)    \|^2_{L^2(\Omega\times\mathbb{S}^2\times[0,T])}= T (2j+1)a_{j, k}$.  Therefore,
\begin{eqnarray*}
\begin{split}
 \|   T_2(\mathbf{x},t) \|^2_{L^2(\Omega\times \mathbb{S}^2\times[0,T])}   \leq \, &  T \bigg\{  \sum_{j=J+1}^\infty (2j+1) \sum_{k=1}^\infty a_{j, k} \\
	&  + \sum_{k=K+1}^\infty    \bigg( a_{0, k}  +   \sum_{j=1}^J  (2j+1)a_{j, k}  \bigg)  \bigg\} \\
	\leq \, &  T \bigg\{  \bigg( C_3 \sum_{k=1}^\infty  k^{-\nu_2} \bigg) \bigg( \sum_{j=J+1}^\infty (2j+1) j^{-\nu_1} \bigg)  \\
    &  +  \bigg(  C_2  + C_3   \sum_{j=1}^J (2j+1) j^{-\nu_1}  \bigg)  \bigg(   \sum_{k=K+1}^\infty  k^{-\nu_2}  \bigg) \bigg\}\\
    \leq \, &  L_2 J^{-(\nu_1-2)} +   L_3  J  K^{-(\nu_2-1)},
\end{split}
\end{eqnarray*}
 where   $L_2$ and $L_3$ are positive constants depending on $C_2$ and $C_3$. In particular, the last inequality follows from the integral bounds of the corresponding series (see \cite{LS-2015}):
 \begin{eqnarray*}
 \sum_{j=J+1}^\infty (2j+1) j^{-\nu_1}   &  \leq  &   \left(  \frac{2}{\nu_1-2}  +  \frac{1}{\nu_1-1}\right) J^{-(\nu_1-2)}   \\
  \sum_{k=K+1}^\infty k^{-\nu_2}   &  \leq  &   \frac{1}{\nu_2-1}   K^{-(\nu_2-1)}.
 \end{eqnarray*}  
 The proof is completed.
\end{proof}

Simple examples can be generated from the following space-time angular power spectrum 
\begin{equation} \label{coef}
a_{j, k} = \frac{1}{1+ (1+j)^{\nu_1}  (1+k)^{\nu_2}},
\end{equation}
with  $\nu_{i} >2$, for $i =1,2$.        We illustrate  space-time realizations on $\mathbb{S}^2\times\{1,2\}$, over 24000 spatial locations, with  coefficients   (\ref{coef}), in two cases:   
\begin{itemize}
\item[(a)] $\nu_1=3$ and $\nu_2=5$, and
\item[(b)] $\nu_1=\nu_2=5$.
\end{itemize}
Figures \ref{rea1} and \ref{rea2} show the corresponding realizations for Scenarios (a) and (b), respectively.  For each case, we truncate the series using $K=J=50$. Note that the parameter $\nu_1$ is the responsible of the spatial scale and smoothness of the realization.   In \cite{LS-2015}, some realizations are illustrated using a similar spectrum,  in a merely  spatial context. 

   We now compare the empirical and theoretical  convergence rates for the cases (a) and (b) described above.  In our experiment, we consider $J=K$ and  study the  (Log) error in terms of  (Log) $J$, taking as the exact solution $J=50$.  Note that,   under this choice,  the bound (\ref{bound_error}) implies that the order of convergence     is $\min\{ (\nu_1-2) /2, (\nu_2-2) /2\}$. Following \cite{LS-2015}, instead of calculating the $L^2$-error, we take the maximum error over all the points on the space-time grid.   The empirical errors are calculated on the basis of 100 independent samples.  Our studies  reflect the theoretical results (see Figure \ref{error}).


\begin{figure}[h!]
\centering
\includegraphics[scale=0.24]{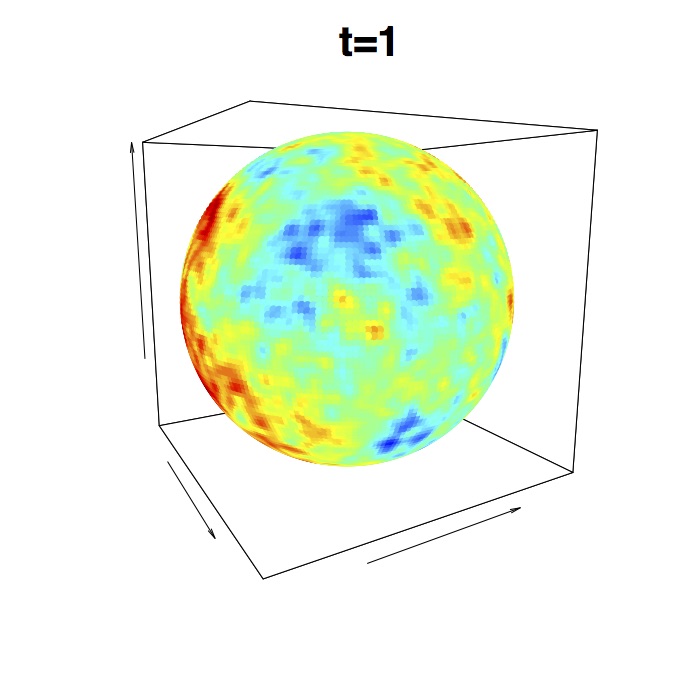} \includegraphics[scale=0.24]{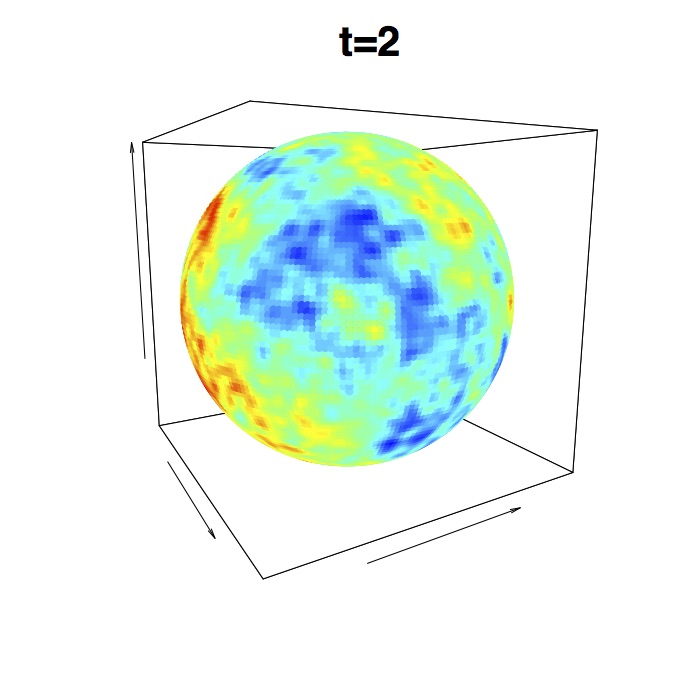}

\caption{Space-time realization on $\mathbb{S}^2\times \{1,2\}$, with spectrum  (\ref{coef}), with $\nu_1=3$ and $\nu_2=5$.}
\label{rea1}
\end{figure}

\begin{figure}[h!]
\centering
\includegraphics[scale=0.24]{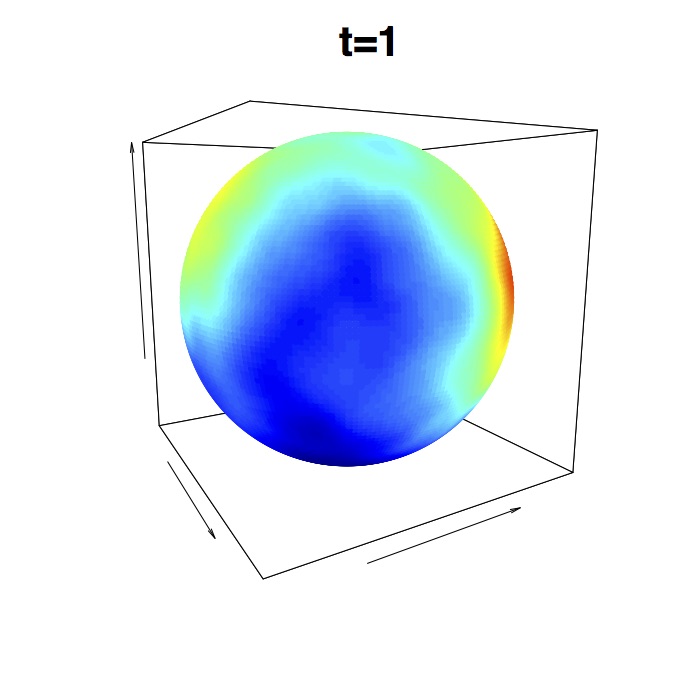} \includegraphics[scale=0.24]{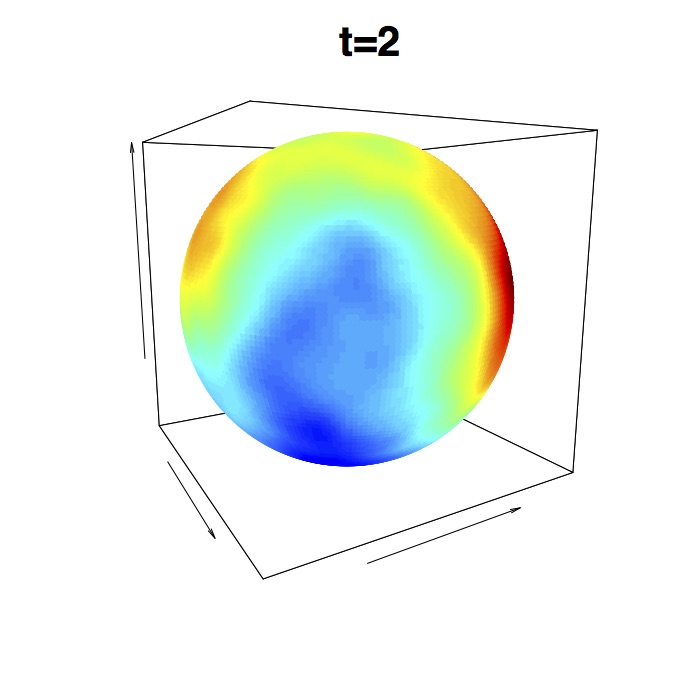}

\caption{Space-time realization on $\mathbb{S}^2\times \{1,2\}$, with spectrum  (\ref{coef}), with $\nu_1=\nu_2=5$.}
\label{rea2}
\end{figure}

\clearpage

\begin{figure}[h!]
\centering
\includegraphics[scale=0.24]{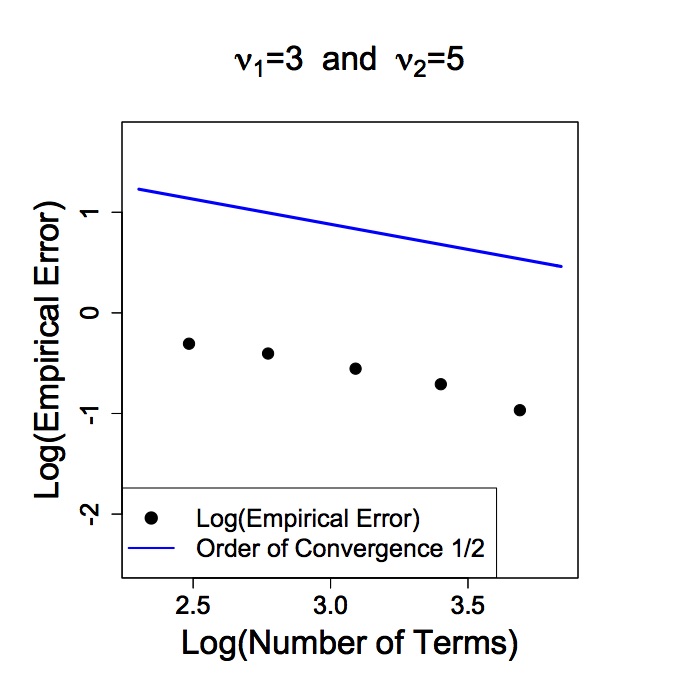} \includegraphics[scale=0.24]{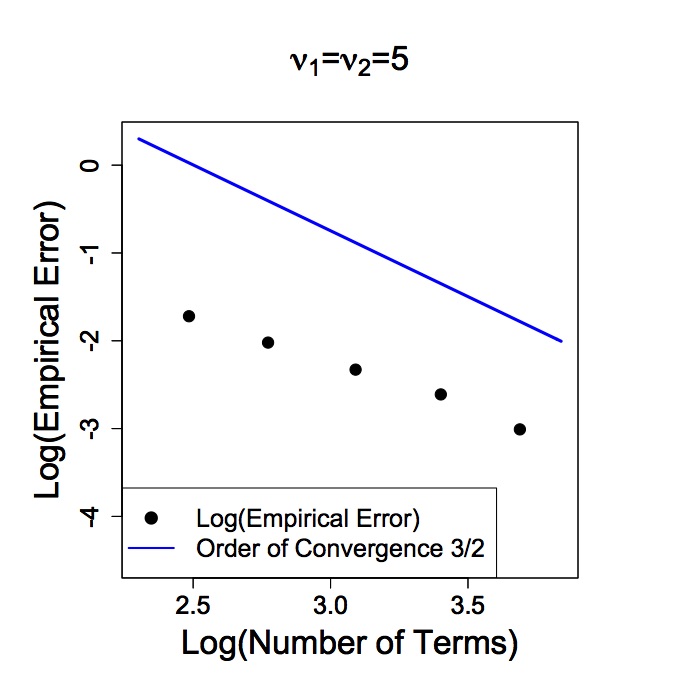}

\caption{Empirical versus theoretical (Log) $L^2$-error of the simulation method, in terms of   (Log) $J$,  with $K=J$. We consider  the spectrum (\ref{coef}) in two cases: (a) $\nu_1=3$ and $\nu_2=5$, and (b) $\nu_1=\nu_2=5$.}
\label{error}
\end{figure}


\section{Conclusions and discussion}

The present work has provided a deep look at the regularity properties of Gaussian fields evolving temporally over spheres. We hope this effort will put the basis for facing important challenges related to space-time processes. There are in fact many open problems related to mathematical modeling as well as to statistical inference and to optimal prediction. A list of open problems is included in the recent survey \cite{PAF-2017}. Amongst them, our paper is certainly related to Problem 1, that is to the construction of non-stationary processes on spheres cross time. Our work could also put the basis to solve Problem 2, related to the construction of multivariate space-time processes. It might be interesting to extend the study of regularity properties to the vector valued case. This would imply the use of a pretty different machinery. 
Problem 10 is closely related to our approach, because regularity properties are crucial to study Gaussian fields under infill asymptotics. 

On the other hand, a question arise naturally: is it possible to make inference with a representation like (\ref{double_kl}) ? Or with its respective spectral decomposition? The answer is, {\it a priori}, no. Establishing a clear relation between the parameters of any random field and its spectrum is not an obvious task, in fact, up today the only familiar stochastic process with known spectrum is Brownian motion, and his closer generalization, the fractional Brownian motion, doesn't have yet a known spectrum. However, under relatively weak hypotheses, the covariance kernel of a GRF turns out to be a Mercer kernel. This opens an alternative to the negative answer previously mentioned, by considering the eigenvalue problem associated to the integral operator induced by the kernel.


\section{Appendix}

\subsection{Karhunen-Loève Theorem} \label{Apendix_KL}

\hspace*{0.7cm} Recent results on functional analysis (see \cite{FM-2009} and \cite{FM-2013}) allow to construct Mercer's kernels in more general contexts. A proper interpretation of these results allows to generalize the classic Karhunen-Loève theorem in a very neat way. We first introduce the framework and basics notations from \cite{FM-2009}. 

Let $S$ be a nonempty set and $K$ a positive definite kernel on $S$, i.e., a function $ K: S \times S \longrightarrow \mathbb{C} $ satisfying the inequality 
$$
\displaystyle
	\sum_{i,j}^{n} \overline{c_{i}} c_{j} K(x_{i}, x_{j}) \geq 0,
$$
whenever $n \geq 1$, $\{ x_{1}, x_{2}, \ldots, x_{n} \}$ is a subset of $S$ and $\{ c_{1}, c_{2}, \ldots, c_{n} \}$ is a subset of $\mathbb{C}$. The set of all positive definite kernels over $S$ is denoted by $\mathcal{PD}(S)$. 

If $S$ is endowed with a measure $\upsilon$, denote by $L^{2}\mathcal{PD}(S, \upsilon)$ the class of kernels such that the associated integral operator 
$$ \displaystyle
	\mathcal{K}(f)(x) := \int_{S} K(x,y) f(y) d\upsilon(y), \ \ \ f \in L^{2}(S, \upsilon), \ \ x \in S,
$$
is positive, that is, when the following conditions holds
\begin{equation*}
\displaystyle
\int_{S} \left( \int_{S} K(x,y) f(y) d\upsilon(y) \right) \overline{f(x)} d\upsilon(x) \ \geq 0, \ \ \ f \in L^{2}(S, \upsilon),
\end{equation*}
	i.e.,
\begin{equation*}
	\left\langle \mathcal{K}(f), f \right\rangle_{L^{2}(S, \upsilon)}  \ \geq 0, \ \ \ f \in L^{2}(S, \upsilon).
\end{equation*}

	Finally we define what is a  Mercer's kernel according to \cite{FM-2013}: 
	A continuous kernel $K$ on $S$ is a Mercer's kernel when it possesses a series representation of the form 
\begin{equation*} \label{Mer_ker_rep_1}
\displaystyle
	K(x,y) = \sum_{j=1}^{\infty} a_{j}(\mathcal{K}) \zeta_{j}(x) \overline{\zeta_{j}(y)}, \ \ \ x, y \in S,
\end{equation*} 
where $\{\zeta_{j}\}_{j \in \mathbb{N}}$ is an $L^{2}(S, \upsilon)$-orthonormal basis of continuous functions on $S$, $\left\lbrace a_{j}(\mathcal{K}) \right\rbrace _{ j \in \mathbb{N} }$ decreases to $0$ and the series converges uniformly and absolutely on compact subsets of $S \times S$. 

For the rest of this manuscript we will consider $S$ to be a topological space endowed with a strictly positive measure $\upsilon$, that is, a complete Borel measure on $S$ for which two properties hold: every open nonempty subset of $S$ has positive measure and every $x \in S$ belongs to an open subset of $S$ having finite measure. Besides, as in Section \ref{Preli}, $(\Omega, \mathcal{F}, \mathbb{P})$ denotes a complete probability space.

\hspace{0.1cm}

\begin{thm} \label{GKL} 
Let $(X_{s})_{s \in S}$ be a complex-valued centred stochastic process with continuous covariance function and such that 
\begin{equation} \label{FM_cond-1}
\displaystyle
	\int_{S} \mathbb{E} (|X_{s}|^{2}) d\upsilon(s) \ < \ +\infty.
\end{equation}
Then, the kernel $K$ associated with the covariance function of $X$ is a Mercer's Kernel. Therefore, $X$ admits a Karhunen-Loève expansion 
\begin{equation} \label{FM_exp-1}
\displaystyle
	X = \sum_{j=1}^{+\infty} \lambda_{j} \zeta_{j}
\end{equation}
where $\{ \zeta_{j} \}_{j \in \mathbb{N}}$ is an orthonormal basis of $L^{2}(S, \upsilon)$, 
\begin{equation*} \label{FM_coeff-1}
\displaystyle
	\lambda_{j} = \int_{S} X_{s} \zeta_{j}(s) \ d\upsilon(s),
\end{equation*}
with $\mathbb{E}[\lambda_{j}] = 0$, and there exists a sequence $\{ a_{j} \}_{j \in \mathbb{N}} $ of non-negative real numbers such that $\mathbb{E}[\lambda_{j} \overline{ \lambda_{k} } ] = \delta_{jk} \, a_{j}$ and ${\rm Var} [\lambda_{j}] = a_{j}$.

The series expansion (\ref{FM_exp-1}) converges in $L^{2}(\Omega \times S; \mathbb{C})$, i.e., 
\begin{equation*}
	\displaystyle
	\lim_{J \rightarrow +\infty} \mathbb{E} \left( \int_{S} \left( X_{s} - \sum_{j=1}^{J} \lambda_{j} \zeta_{j}(s) \right)^{2} d\upsilon(s) \right) = 0. 
\end{equation*}

The series expansion (\ref{FM_exp-1}) converges in $L^{2}(\Omega; \mathbb{C})$ for all $s \in S$,  i.e., for all $s \in S$
\begin{equation*}
	\displaystyle
	\lim_{J \rightarrow +\infty} \mathbb{E} \left( \left( X_{s} - \sum_{j=1}^{J} \lambda_{j} \zeta_{j}(s) \right)^{2} \right) = 0. 
\end{equation*}

The convergence of the series expansion (\ref{FM_exp-1}) is absolute and uniform on compact subsets of $S$ in the mean-square sense.
\end{thm}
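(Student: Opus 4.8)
The plan is to reduce the whole statement to Mercer's theorem in the generality of \cite{FM-2013}, and then build the expansion by orthogonal projection onto the eigenfunctions of the covariance operator.

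\emph{Reduction to a Mercer kernel.} First I would check that the covariance kernel $K(s,t):=\mathbb{E}[X_s X_t]$ lies in $L^2\mathcal{PD}(S,\upsilon)$ and satisfies the hypotheses of the Mercer theorem of \cite{FM-2013}. Positive definiteness is immediate, since $\sum_{i,j}\overline{c_i}c_j K(s_i,s_j)=\mathbb{E}\big|\sum_i c_i X_{s_i}\big|^2\ge 0$; continuity is assumed; and the Cauchy--Schwarz bound $|K(s,t)|\le \sqrt{\mathbb{E}(X_s^2)}\sqrt{\mathbb{E}(X_t^2)}$ together with condition (\ref{FM_cond-1}) shows that $K\in L^2(S\times S,\upsilon\otimes\upsilon)$ and that the diagonal $s\mapsto K(s,s)=\mathbb{E}(X_s^2)$ is $\upsilon$-integrable. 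Hence the integral operator $\mathcal{K}$ is Hilbert--Schmidt, self-adjoint, positive (its quadratic form equals $\mathbb{E}\big|\int_S X_s\overline{f(s)}\,d\upsilon(s)\big|^2\ge 0$) and of finite trace. Invoking \cite{FM-2013}, $K$ is a Mercer kernel: $K(s,t)=\sum_{j\ge1}a_j\zeta_j(s)\overline{\zeta_j(t)}$, where $\{\zeta_j\}$ is an $L^2(S,\upsilon)$-orthonormal basis of continuous functions, $a_j=a_j(\mathcal{K})\downarrow 0$ are the eigenvalues of $\mathcal{K}$, and the series converges absolutely and uniformly on compact subsets of $S\times S$; in particular $\int_S K(s,s)\,d\upsilon(s)=\sum_j a_j<\infty$.

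\emph{Coefficients and their moments.} After passing, using continuity of the covariance, to a jointly measurable modification of $X$, the estimate $\mathbb{E}\int_S|X_s\zeta_j(s)|\,d\upsilon(s)\le \big(\int_S\mathbb{E}(X_s^2)\,d\upsilon(s)\big)^{1/2}\|\zeta_j\|_{L^2}<\infty$ and Fubini show that $\lambda_j:=\int_S X_s\zeta_j(s)\,d\upsilon(s)$ is a well-defined element of $L^2(\Omega)$ with $\mathbb{E}[\lambda_j]=0$. Two further applications of Fubini give $\mathbb{E}[X_s\lambda_j]=\int_S K(s,t)\zeta_j(t)\,d\upsilon(t)=(\mathcal{K}\zeta_j)(s)=a_j\zeta_j(s)$ and $\mathbb{E}[\lambda_j\lambda_k]=\langle\mathcal{K}\zeta_k,\zeta_j\rangle_{L^2(S,\upsilon)}=a_k\delta_{jk}$, so that $\mathrm{Var}[\lambda_j]=a_j$, as claimed.

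\emph{The three convergence statements.} For the partial sums $S_J(s):=\sum_{j=1}^J\lambda_j\zeta_j(s)$, the moment identities of the previous step yield, after expanding the square,
\[
\mathbb{E}\bigl[(X_s-S_J(s))^2\bigr]=K(s,s)-\sum_{j=1}^{J}a_j\,|\zeta_j(s)|^2 .
\]
Since by Mercer's theorem $\sum_j a_j|\zeta_j(s)|^2=K(s,s)$ for every $s$, the right-hand side tends to $0$, giving the pointwise $L^2(\Omega)$ convergence; integrating over $S$ and using $\int_S K(s,s)\,d\upsilon=\sum_j a_j$ and orthonormality gives $\mathbb{E}\int_S(X_s-S_J(s))^2\,d\upsilon(s)=\sum_{j>J}a_j\to 0$, i.e.\ $L^2(\Omega\times S)$ convergence. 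On a compact $S'\subseteq S$ the partial sums $s\mapsto\sum_{j\le J}a_j|\zeta_j(s)|^2$ are continuous, nondecreasing in $J$, and converge pointwise to the continuous function $s\mapsto K(s,s)$; by Dini's theorem the convergence is uniform on $S'$, whence $\sup_{s\in S'}\mathbb{E}[(X_s-S_J(s))^2]\to 0$, and the absoluteness of the expansion transfers from that of the Mercer series for $K$.

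\emph{Main obstacle.} The only genuinely non-elementary ingredient is Mercer's theorem in the required generality (a strictly positive, not necessarily finite, measure and no compactness of $S$), which I would cite from \cite{FM-2009, FM-2013} as a black box. Beyond that, the delicate points are purely measure-theoretic: producing a jointly measurable version of $X$ so that $\lambda_j$ is a bona fide random variable, and justifying the repeated Fubini--Tonelli interchanges used to compute $\mathbb{E}[X_s\lambda_j]$ and $\mathbb{E}[\lambda_j\lambda_k]$ — both of which rest on the integrability hypothesis (\ref{FM_cond-1}) and Cauchy--Schwarz. Once the uniform-on-compacts convergence of the kernel series is in hand, Dini's theorem disposes of the uniform convergence of the expansion with no extra effort.
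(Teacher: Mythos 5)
Your proposal follows exactly the paper's route: verify that the covariance kernel is continuous, $L^{2}$-positive definite with integrable diagonal, invoke the Ferreira--Menegatto Mercer theorem as a black box, and then derive the expansion by projecting onto the eigenfunctions. In fact you go further than the paper's own proof, which stops after establishing that $K$ is a Mercer kernel and leaves the moment computations and the three convergence statements (which you correctly carry out via Fubini, the identity $\mathbb{E}\bigl[(X_{s}-S_{J}(s))^{2}\bigr]=K(s,s)-\sum_{j\le J}a_{j}|\zeta_{j}(s)|^{2}$, and Dini's theorem) ``to the reader.''
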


\begin{proof}

Let $K$ be the kernel associated to the covariance of the stochastic process $X$, i.e. 
\begin{eqnarray*} 
	K: S \times S &\longrightarrow & \mathbb{C} \\
		(t, s) &\longrightarrow & K(t, s) = \mathbb{E}(X_{t} \overline{ X_{s} } ), 	
\end{eqnarray*}
and let $\mathcal{K}$ be it's associated integral operator. From hypothesis (\ref{FM_cond-1}) it is direct to see that the mapping $\kappa$, such that $s \in S \mapsto  \mathbb{C} \ni \kappa(s) := K(s,s)$, belongs to $L^{1}(S, \upsilon)$. Since $K$ is positive definite in the usual sense, $K(s,t) = \overline{K(t,s)}$ and the matrix 
\[
\begin{bmatrix}
K(s,s) & K(s,t) \\
\overline{K(s,t)} & K(t,t)  
\end{bmatrix}
\]
is positive definite, hence its determinant $K(s,s) K(t,t) - K(s,t)\overline{K(s,t)}$ is non-negative. Thus
\begin{eqnarray*}
\displaystyle
	\int_{S \times S} |K(s,t)|^{2} d(\upsilon \otimes \upsilon) (s,t) &\leq & \int_{S \times S} K(s,s) K(t,t)  {\rm d} (\upsilon \otimes \upsilon) (s,t) \\
	&=& \left( \int_{S} \kappa(s) {\rm d} \upsilon (s) \right)^{2} \ < \ \infty,
\end{eqnarray*}
that is, $K \in L^{2}(S \times S, \upsilon \otimes \upsilon)$.

	Now, consider $f, g \in L^{2}(S, \upsilon)$ and the classic tensor product of functions, i.e., 
$$ 
(f \otimes g)(s,t) = f(s) g(t), \ \ \ s,t \in S,
$$
with inner product given by 
$$ 
\left\langle f_{1} \otimes f_{2} , g_{1} \otimes g_{2} \right\rangle _{L^{2}(S \times S, \upsilon \times \upsilon)} \ = \ \left\langle f_{1} , g_{1}\right\rangle _{L^{2}(S, \upsilon)} \left\langle f_{2} , g_{2}\right\rangle _{L^{2}(S, \upsilon)}.
$$

Apparently, $f \otimes g \in L^{2}(S \times S, \upsilon \otimes \upsilon)$, and thus, by Cauchy-Schwarz inequality we obtain $K \cdot f \otimes g \ \in L^{1}(S \times S,\upsilon \otimes \upsilon)$. This last condition allows to use Fubini's theorem, so for $f \in L^{2}(S, \upsilon)$ 
\begin{eqnarray*}
\displaystyle
	\left\langle\mathcal{K}(f), f\right\rangle_{2} &=& \int_{S} \left( \int_{S} K(s,t) f(t) \, {\rm d} \upsilon(t) \right) \overline{ f(s) } \, {\rm d}\upsilon(s) \\
	&=& \int_{S} \int_{S} K(s,t) f(t) \overline{ f(s) } \, {\rm d} \upsilon(t) {\rm d} \, \upsilon(s) \\
	&=& \int_{S} \int_{S} \mathbb{E}(X_{s} \overline{ X_{t} } ) f(t) \overline{ f(s) } \, {\rm d} \upsilon(t) \, {\rm d} \upsilon(s) \\
	&=& \mathbb{E} \int_{S} \int_{S} X_{s} \overline { f(s) } \ \overline{ X_{t} } f(t) \, {\rm d} \upsilon(t) \, {\rm d} \upsilon(s) \\
	&=& \mathbb{E} \left\langle X , f \right\rangle_{2} \left\langle f , X \right\rangle_{2} \\
	&=& \mathbb{E} \left\langle X , f \right\rangle_{2}  \overline { \left\langle X , f \right\rangle_{2} } \\
	&=& \mathbb{E} | \left\langle X , f \right\rangle_{2} | ^{2} \ \geq \ 0,
\end{eqnarray*}
hence, $K \in L^{2}\mathcal{P}\mathcal{D}(S, \upsilon)$.

In conclusion, the kernel $K$ is continuous and $L^{2}(S, \upsilon)$-positive definite on $S$, and the mapping $\kappa$ belongs to $L^{1}(S, \upsilon)$. Then, by theorem 3.1 in \cite{FM-2013} $K$ is a Mercer's kernel. 

	The rest of the proof concerns the Karhunen-Loève expansion of the process $X$ and it follows well-known arguments that we reproduce for the convenience of the reader. We have that $K$ has a $L^{2}(S \times S, \upsilon \otimes \upsilon)$-convergent series representation in the form
\begin{equation*} \label{FM_ker_rep-1}
\displaystyle 
	K(t,s) = \sum_{j=1} ^{+\infty} a_{j}(\mathcal{K}) \zeta_{j}(t) \overline{\zeta_{j}(s)}, \ \ \ t, s \ \in S,
\end{equation*} 
 $\left\lbrace a_{j}(\mathcal{K}) \right\rbrace _{j \in \mathbb{N}}$ decreases to $0$, $\{ \zeta_{j}\}_{j \in \mathbb{N}}$ is a $L^{2}(S, \upsilon)$-orthonormal basis. 
The convergence of the series is absolute  and uniform on compact subsets of $S \times S$. 

From condition (\ref{FM_cond-1}) there exists a set $\Omega_{0} \subseteq \Omega$ with $\mathbb{P}(\Omega_{0}) = 1$ such that for all $\omega \in \Omega_{0}$, the mapping $s \in S \mapsto  X_{s}(\omega)$ is in $L^{2}(S, \upsilon)$. Define the random coefficients
\begin{equation*} \label{FM_coeff-2}
\displaystyle
	\lambda_{j} = \int_{S} X_{t} \overline{ \zeta_{j}(t) } \, {\rm d} \upsilon(t) = \left\langle X, \zeta_{j} \right\rangle_{L^{2}(S, \upsilon)}.
\end{equation*}
Note that $\mathbb{E}(\lambda_{j}^{2}) = \left\langle\mathcal{K}(\zeta_{j}) , \zeta_{j} \right\rangle_{2} $, hence Cauchy-Scharwz inequality guarantees that $\lambda_{j} \in L^{2}(\Omega)$ for all $j$. Also,  Fubini's theorem allows to see that $\mathbb{E}[\lambda_{j}] = 0$, $\mathbb{E}[\lambda_{j} \overline{  \lambda_{k} } ] = \delta_{j k} a_{j}$ and ${\rm Var} [\lambda_{j}] = a_{j}$.

Now, for any fixed $\omega \in \Omega_{0}$ it is clear that
\begin{equation*} 
\displaystyle
	f_{J}(\omega) := \int_{S} \left( X_{s}(\omega) - \sum_{j=1}^{J} \lambda_{j}(\omega) \zeta_{j} \right)^{2} {\rm d} \upsilon(s) \ \xrightarrow[J \rightarrow + \infty]{} \ 0.
\end{equation*}
By orthogonality of the $\left( \zeta_{j} \right)$ we observe that
\begin{eqnarray*}
\displaystyle
	0 \leq f_{J}(\omega) &:=& \int_{S} \bigg| X_{s}(\omega) - \sum_{j=1}^{J} \lambda_{j}(\omega) \zeta_{j} \bigg| ^{2} {\rm d} \upsilon(s) \\ 
	&=& \int_{S} | X_{s}(\omega) | ^{2} {\rm d} \upsilon(s) - \sum_{j=1}^{J} | \lambda_{j}(\omega) | ^{2} \\
	&\leq &  \int_{S} | X_{s}(\omega) | ^{2} {\rm d} \upsilon(s) - \sum_{j=1}^{J-1} | \lambda_{j}(\omega) |  ^{2} := f_{J-1}(\omega), 	
\end{eqnarray*}
therefore, 
\begin{equation*}
\displaystyle
	|f_{J}(\omega)| \leq |f_{0}(\omega)| := \int_{S} |X_{s}(\omega) | ^{2} {\rm d} \upsilon(s), \ \ \ J \in \mathbb{N}.
\end{equation*}
By condition (\ref{FM_cond-1}) is clear that $\mathbb{E}(|f_{0}|) < \infty $, hence, the dominated convergence theorem allows us to conclude that 
\begin{equation*}
\displaystyle
	\mathbb{E}(|f_{J}|) = \mathbb{E} \left( \int_{S} \bigg| X_{s}(\omega) - \sum_{j=1}^{J} \lambda_{j}(\omega) \zeta_{j} \bigg| ^{2} d\upsilon(s) \right) \xrightarrow[J \rightarrow + \infty]{} \ 0.
\end{equation*}
Now, fix $s \in S$. Again by Fubini's theorem we observe that,
\begin{eqnarray*}
\displaystyle
& & \mathbb{E} \bigg| X_{t} - \sum_{j=1} ^{n} \lambda_{j} \zeta_{j} (t) \bigg| ^{2} \\
&=& \mathbb{E} | X_{t} | ^{2} 
	-\sum_{j=1} ^{n} \overline{ \zeta_{j}(t) } \mathbb{E} \left( X_{t} \overline{ \lambda_{j} } \right)  
	- \sum_{j=1} ^{n} \zeta_{j}(t) \mathbb{E} \left( \overline{ X_{t} } \lambda_{j} \right) 
	+ \sum_{j, k = 1} ^{n}  \zeta_{j}(t) \overline { \zeta_{k} (t) } \mathbb{E} \left( \lambda_{j} \overline{ \lambda_{k} }   \right) \\
&=& \mathbb{E} | X_{t} | ^{2} 
	- \sum_{j=1} ^{n} \overline{ \zeta_{j}(t) } \mathbb{E} \left( X_{t} \int_{S} \overline{ X_{s} } \zeta_{j}(s) \, {\rm d} \upsilon(s) \right)  \\
& & - \sum_{j=1} ^{n} \zeta_{j}(t) \mathbb{E} \left( \overline{ X_{t} } \int_{S} X_{s} \overline{ \zeta_{j}(s) } \, {\rm d} \upsilon(s) \right) 
	+ \sum_{j, k = 1} ^{n}  \zeta_{j}(t) \overline{ \zeta_{k} (t) } \delta_{kj} a_{k}  \\
&=& \mathbb{E} | X_{t} | ^{2} 
	- \sum_{j=1} ^{n} \overline{ \zeta_{j}(t) } \int_{S} \mathbb{E} \left( X_{t} \overline{ X_{s} } \right) \zeta_{j}(s) \, {\rm d} \upsilon(s)  \\
& &	- \sum_{j=1} ^{n} \zeta_{j}(t) \int_{S} \mathbb{E} \left( \overline{ X_{t} } X_{s} \right) \overline{ \zeta_{j}(s) } \, {\rm d} \upsilon(s)	
	 + \sum_{ j = 1} ^{n} | \zeta_{j} (t) | ^{2}  a_{j}  \\
&=& K(t,t) - \sum_{j=1} ^{n} \overline{ \zeta_{j}(t) } \int_{S} K(t,s) \zeta_{j}(s) \, {\rm d} \upsilon(s)  \\
& &	- \sum_{j=1} ^{n} \zeta_{j}(t) \int_{S} K(s,t) \overline { \zeta_{j}(s) } \, {\rm d} \upsilon(s) 
	+ \sum_{ j = 1} ^{n} | \zeta_{j} (t) | ^{2}  a_{j}  \\
&=& K(t,t) - \sum_{j=1} ^{n} \overline{ \zeta_{j}(t) } \int_{S} K(t,s) \zeta_{j}(s) \, {\rm d} \upsilon(s)   \\
& &	- \sum_{j=1} ^{n} \zeta_{j}(t) \int_{S} \overline { K(t,s) } \overline { \zeta_{j}(s) } \, {\rm d} \upsilon(s) 
	+ \sum_{ j = 1} ^{n} | \zeta_{j} (t) | ^{2}  a_{j}  \\	
&=& K(t,t) - \sum_{j=1} ^{n} \overline{ \zeta_{j}(t) } \mathcal{K} \zeta_{j}(t)
	- \sum_{j=1} ^{n} \zeta_{j}(t) \overline { \mathcal{K} \zeta_{j}(t) }
	+ \sum_{ j= 1} ^{n} | \zeta_{j} (t) | ^{2}  a_{j}  \\
&=& K(t,t) - 2\sum_{j=1} ^{n} | \zeta_{j} (t) |^{2} a_{j} + \sum_{ j = 1} ^{n} | \zeta_{j} (t) | ^{2}  a_{j} \\
&=& K(t,t) - \sum_{j=1} ^{n} | \zeta_{j} (t) |^{2} a_{j} \ \
 \xrightarrow[n \rightarrow + \infty] \ 0. 
\end{eqnarray*} 
Thus, the proof is concluded. 
\end{proof}

\begin{rem} Karhunen-Loève expansion (or Karhunen-Loève theorem), usually require extra hypothesis, like compactness of the associated space $S$ or some kind of invariance of the field. In that line the Stochastic Peter-Weyl theorem (theorem 5.5 introduced in \cite{MP-2011}) may be understood as a Karhunen-Loève theorem for 2-weakly isotropic fields over $G$ a topological compact group with associated Haar measure of unit mass. 
Theorem \ref{GKL} only require condition (\ref{FM_cond-1}) and the continuity of the covariance function.



\end{rem}


\section*{Acknowledgments}

We are indebted to the Editor and to three Referees, whose thorough reviews allowed for a considerably improved version of the manuscript. 



\end{document}